\documentclass[a4paper,notitlepage,twoside,leqno,12pt]{amsart}
\usepackage{graphicx}
\usepackage{float}
\usepackage{bbm,pifont,latexsym}
\usepackage{anysize}

\marginsize{3.8cm}{3.8cm}{3.8cm}{3.6cm}

\usepackage{dcolumn,indentfirst}
\usepackage[colorlinks=false,linkcolor=black,citecolor=black,urlcolor=black,bookmarksopen=true,linktocpage=true,pdfstartview=FitH]{hyperref}
\usepackage{amsmath,amssymb,amscd,amsthm,amsfonts,mathrsfs}
\usepackage{color,graphicx,xcolor,graphics,subfigure,extarrows,caption2}
\usepackage{titletoc}
\newtheorem{thm}{Theorem}[section]

\newtheorem*{cls}{The Closing Lemma}
\newtheorem*{main}{Main Theorem}

\newtheorem{prop}[thm]{Proposition}
\newtheorem{lema}[thm]{Lemma}

\theoremstyle{definition}
\newtheorem{defi}{Definition}

\usepackage{enumerate,enumitem}

\makeatletter\@addtoreset{equation}{section}\makeatother

\begin{document}

\author{Panjing Wu  and  Gaofei Zhang}
\address{Department of Mathematics, Nanjing  University, Nanjing£¬  2100993,   P. R. China}
\email{pgwuxidian@163.com}
\address{Department of Mathematics, QuFu Normal University, Qufu  273165,   P. R. China}
\email{zhanggf@hotmail.com}

\title[]{  Gluing Polynomials along the circle  }

\begin{abstract}Gluing is a cut and paste construction   where the dynamics of a map in a given domain is replaced by a different one, under the condition that the two agree along the gluing curve.    Here  we consider  two polynomials   with  a finite super-attracting fixed point of the same degree. We prove that  any two such non-renormalizable  polynomials can be glued  into a rational map along the Jordan boundary of the immediate basin of the super-attracting fixed point.
\end{abstract}

\subjclass[2010]{Primary: 37F45; Secondary: 37F10, 37F30}

\keywords{}

\date{\today}



\maketitle



\section{Introduction}

Gluing  is a pervasive concept in mathematics which bridges local and global perspectives.  In particular,
Gluing two dynamics into a new one,
 by identifying   two of their subsystems as a single one,  is one of  fundamental ideas in the study of  dynamics. The most well known
  in this regard are mating, intertwining surgery  as well as tuning, see  \cite{BEKM}\cite{DH1}\cite{Do}\cite{E}\cite{EM}\cite{IK}\cite{In}\cite{Re}\cite{ShT1} \cite{SW}\cite{Tan}\cite{YaZa}.  Recognizing
  circle dynamics generated by covering maps as a basic building block in  lower dimensional dynamics, we will
    present a     way in this  work  on how to glue two polynomial dynamics along such a circle.
\subsection{Statement of the main result}

Consider two polynomials $f$ and $g$ with connected Julia sets and of degrees  $d_1$ and $d_2$  respectively, so  that  both $f$ and $g$ have a marked finite super-attracting fixed point of the same degree $d_0$. We further assume that   the $\emph{independence condition}$ holds for both $f$ and $g$, that is,
the other critical orbits do not intersect the  marked immediate super-attracting basins.  By  \cite{RY}   the boundaries of the
immediate super-attracting basins are  Jordan curves on which the two dynamics agree.  Then by gluing $f$ and $g$ along the Jordan boundaries
 of  the marked immediate basins, one can always obtain a topological branched covering $F$ of the sphere to itself  with degree $d_1 + d_2 - d_0$.   We call $F$ the $\emph{topological gluing}$ of $f$ and $g$, see $\S2$ for a precise definition.

 One fascinating nature of holomorphic   dynamics is a deep relationship between the topological  and geometric properties. A cornerstone of this theory is Thurston's theorem for the characterization  of post-critically finite rational maps.  Thus in the  situation here,  a fundamental question is under what conditions, a topological gluing can be realized by a geometric one?

   Let $B_\infty(\cdot)$ and  $B_{0}(\cdot)$  be  the immediate basins at  infinity and the origin respectively. Let   $\mathcal{J}_f$ and $\mathcal{J}_g$  be the Julia sets of $f$ and $g$,  and $\Gamma_f$ and $\Gamma_g$ the Jordan boundaries of the marked super-attracting basins of $f$ and $g$ respectively.  We say a rational map
\begin{equation}\label{fr}
 G(z)  =  \gamma z^m \frac{(z - \alpha^1)\cdots(z-\alpha^k)}{(z - \beta^1)\cdots(z-\beta^l)}
\end{equation} with $m = d_2,\: l = d_2-d_0 ,   \:  k = d_1 - d_0$,   realizes the $\emph{geometric gluing}$ of $f$ and $g$ if  there exist  two  holomorphic   conjugations
  $$\phi:  {B_\infty(f)} \to   {B_\infty(G)}     \: \hbox{and} \:\: \psi:   {B_\infty(g)} \to  {B_0(G)},$$  which   can be   extended  respectively to  conjugations   between the boundaries
$$
 \begin{CD}
         \mathcal{J}_f           @  > \phi      >  >  \partial {B_\infty(G)}        \\
           @V f  VV                         @VV  G  V\\
           \mathcal{J}_f          @  >\phi   >  >       \partial {B_\infty(G)}
     \end{CD}
     $$

     and
     $$
   \begin{CD}
         \mathcal{J}_g           @  > \psi      >  >  \partial {B_0(G)}        \\
           @V g  VV                         @VV  G  V\\
           \mathcal{J}_g          @  >\psi   >  >       \partial {B_0(G)}
     \end{CD}
$$
with  $$\phi(\Gamma_f) = \psi(\Gamma_g) = \partial {B_\infty(G)} \cap \partial {B_0(G)}$$ being a Jordan curve, which is called     the $\emph{gluing curve}$ of $\mathcal{J}_f$ and $\mathcal{J}_g$.

Roughly speaking, a geometric gluing of two polynomials means that,
their  Julia sets    are embedded in   the Julia set of  a rational map $G$  with the dynamics being preserved, and moreover,  the  boundaries  of the two
marked immediate super-attracting basins are glued into a Jordan curve,   so that  the two polynomial Julia sets can be viewed  respectively from the origin and  infinity for $G$.    This property is reminiscent of Bers' celebrated theorem
 on the simultaneous uniformization of Fuchsian groups \cite{BE}, as well as  the work of McMullen on the   simultaneous uniformization of Blaschke products \cite{McM}. For this reason,  we will also  refer to  the geometric gluing in the following   as the $\emph{simultaneous uniformization}$ of two polynomials $f$ and $g$, and denote it as $\mathcal{G}(f, g)$.

   It was known that the geometric gluing always exists for post-critically finite case, which is derived from Thurston's characterization theorem for rational maps \cite{Zhang}.
    The goal of this paper is to   show   that the geometric gluing actually exists   for
     a large class of  post-critically infinite polynomials.   Before we state the main theorem, let us introduce some notions first.

For  $d > d_0 \ge  2$,
let  $\mathcal{P}_{d, d_0}$   denote the space of  polynomials $f$ of degree $d$ with connected Julia sets and
a super-attracting fixed point of degree $d_0$ at the origin,  so that the independence condition holds.  Endow $\mathcal{P}_{d, d_0}$  with the compact open topology.    Let $\mathcal{N}_{d,d_0}$  denote the space of all $f$ in  $\mathcal{P}_{d, d_0}$ which   have  no    indifferent cycles and whose critical points in the Julia set are all non-renormalizable.

Now for  $d_1, d_2  >  d_0 \ge  2$, endow $\mathcal{N}_{d_1, d_0} \times \mathcal{N}_{d_2, d_0}$ with the product topology.
 Let  $m = d_2, l = d_2-d_0$,  and  $k = d_1 - d_0$.  Set
$$
G_{d_1+d_2 - d_0}  = \bigg{\{} \gamma z^m \frac{(z - \alpha^1)\cdots(z-\alpha^k)}{(z - \beta^1)\cdots(z-\beta^l)}\: \bigg{|} \: \alpha^i, \beta^j, \gamma \ne 0  \hbox{ and }  \alpha^i \ne \beta^j \bigg{\}}.
$$
    Let
  $\mathcal{R}_{d_1+ d_2 - d_0}$ be  the quotient space of $G_{d_1+d_2 - d_0}$ modulo scaling conjugations.  Endow $G_{d_1+d_2 - d_0}$ with  the
  compact open topology  and $\mathcal{R}_{d_1+ d_2 - d_0}$   the quotient topology.
   \begin{main}
Let $d_1, d_2 >  d_0 \ge 2$. Then for any pair
$$(f, g) \in \mathcal{N}_{d_1, d_0} \times \mathcal{N}_{d_2, d_0},$$  there is a unique rational map
$G\in \mathcal{R}_{d_1 + d_2 - d_0}$ realizing the  simultaneous uniformization of $f$ and $g$.  Moreover,  the map
$$
\mathcal{G}: \mathcal{N}_{d_1, d_0} \times \mathcal{N}_{d_2, d_0} \to  \mathcal{R}_{d_1 + d_2 - d_0}
$$  by setting  $\mathcal{G}(f, g) = G$ is continuous.
\end{main}

With the aid of quasiconformal surgery, small Julia sets and indifferent cycles for polynomials
 might be exchanged with super-attracting cycles.  In light of these considerations, by the main theorem and surgery technique,
one may expect to   extend $\mathcal{G}$  to the larger space
$\mathcal{P}_{d_1, d_0} \times \mathcal{P}_{d_2, d_0}$.   Among  the challenges  arising  in such an extension is the case  when    a non-locally connected small Julia set for one polynomial is glued to a parabolic periodic point  for the other. In this situation,  it is not even known whether  the geometric gluing exists  that preserves  the topology and dynamics of the small Julia set.  This is closely related to Milnor-Goldberg's parabolic deformation conjecture, which posits  that
one can always replace
a simply connected attracting immediate basin by a parabolic one  while preserving  the dynamics on the Julia set.  For the progress towards the solution of this conjecture, see
\cite{H1}\cite{PR}.

In a manner analogous to
polynomial mating, we will see in the following that  there are parabolic parameters  for which
 the gluing operator $\mathcal{G}$  can be defined yet fails to be continuous.

\subsection{Examples of the gluing}   Consider  the   family
$$
f_\alpha: z \mapsto z^3 + \frac{3}{2}\alpha z^2,\:\: \alpha \in \Bbb C,
$$
whose dynamics and  parameter space  had been extensively investigated by P. Roesch \cite{R1}. Let $\Omega_\alpha$
be the  immediate basin at the origin for $f_\alpha$.   According to \cite{R1} the hyperbolic component $\mathcal{H}_0$ in the parameter space, which contains the origin,
   is a Jordan domain. Moreover,  any $\alpha \in \partial \mathcal{H}_0$ is either  non-renormalizable or a parabolic parameter.  In the previous case,  $\partial \Omega_\alpha$  contains
  the free critical point, and in the later case,
    $\partial \Omega_\alpha$  contains a periodic parabolic orbit. For $\alpha \in \partial \mathcal{H}_0$, the restriction of $f_\alpha$ on $\partial \Omega_\alpha$ is conjugate to the angle doubling map. The angle $\theta$  between
     the fixed point and the free critical point or the parabolic periodic point, to which the free critical point is associated,  is called the $\emph{characteristic angle}$  for $f_\alpha$.  The correspondence between $\theta$ and $\alpha$ is a homeomorphism between $\Bbb T$ and $\partial \mathcal{H}_0$. We denote it as $\alpha = \alpha(\theta)$.
     
      \begin{figure}[!htpb]
\setlength{\unitlength}{1mm}
\begin{center}

\includegraphics[width=70mm]{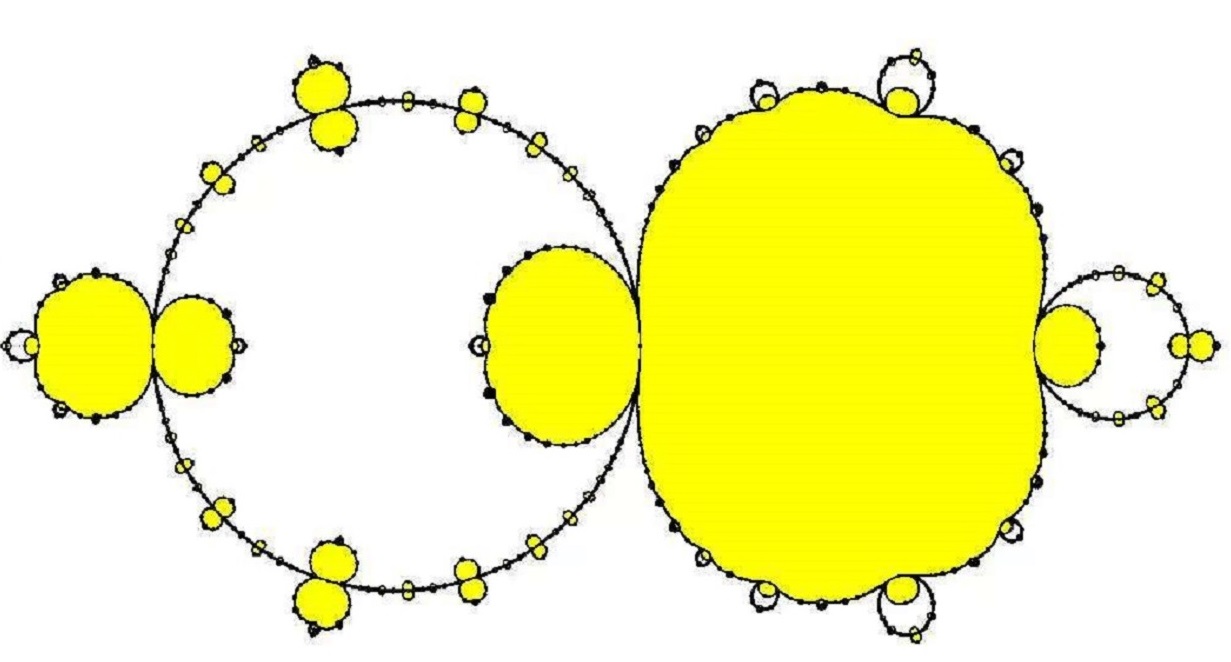}
\caption{The Julia set for $z^3 \frac{z - 3}{1 - 3z}$ with $1$ being a parabolic fixed point having two petals which are symmetric about the unit circle.}
\label{Figure 1}

\end{center}
\end{figure}

 \begin{figure}[!htpb]
\setlength{\unitlength}{1mm}
\begin{center}

\includegraphics[width=60mm]{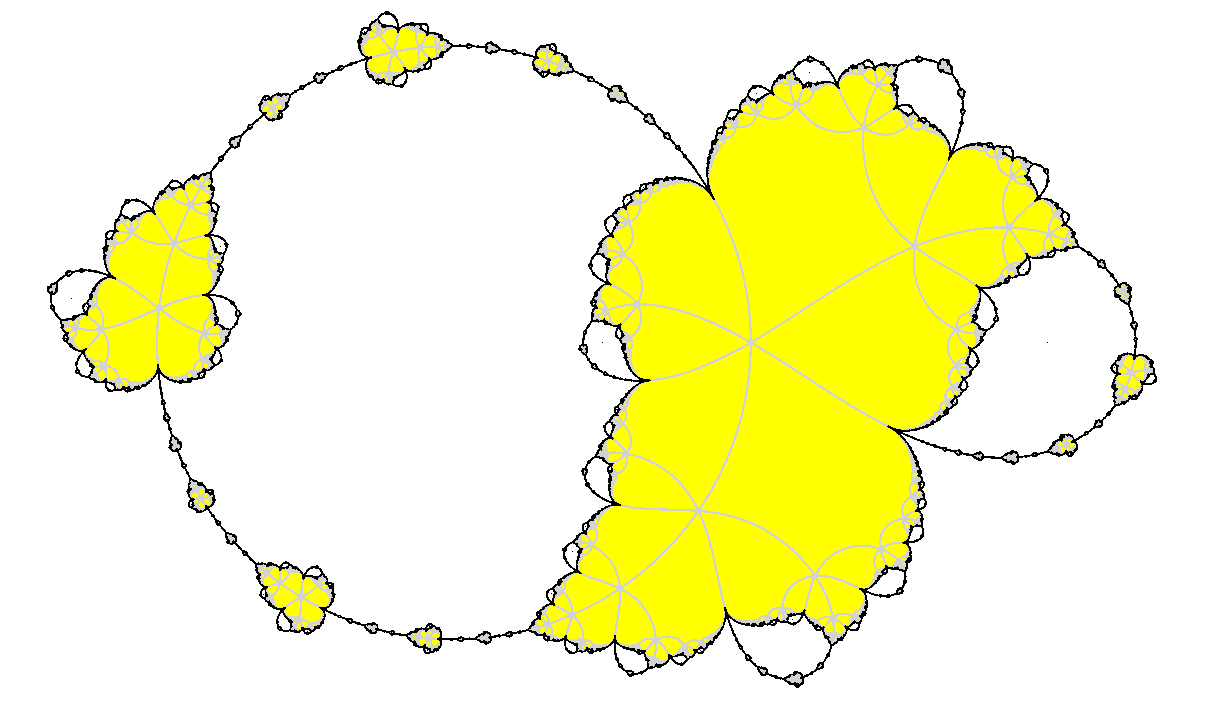}
\caption{The Julia set for $e^{2\pi i \theta_0} z^3 \frac{z - 2}{1 - 2z}$  with   a fixed
 parabolic petal which is symmetric about the unit circle. The  Julia set for $e^{-2\pi i \theta_0} z^3 \frac{z - 2}{1 - 2z}$ is its
  symmetric image about the real line. }
\label{Figure 1}

\end{center}
\end{figure}

 It can be shown that  the geometrical gluing
  can be defined for any two elements in $\partial \mathcal{H}_0$.  By bonding with
  its mirror image,  the map
   $$
   f_{\alpha(\theta)} \to \mathcal{G}(f_{\alpha(\theta)}, f_{\alpha(-\theta)})
   $$   symmetrizes all elements in $\partial \mathcal{H}_0$ to Blaschke products.  It turns out that
    the discontinuity of the    $\mathcal{G}: \partial \mathcal{H}_0 \times \partial \mathcal{H}_0 \to \mathcal{R}_4$ happens exactly at
      those pairs $(f_{\alpha(\theta)}, f_{\alpha(-\theta)})$ for which $\theta$ is periodic under angle doubling map.  For an illustration, let   $\theta = 0$. Then $\alpha(0) = 4i/3$ and $f_{\alpha(0)}$ has a parabolic fixed point in the boundary of the immediate basin at the origin. A simple calculation shows that
$$
\mathcal{G}(f_{\alpha(0)}, f_{\alpha(0)}) = z^3 \frac{z - 3}{1 - 3z}, 
$$
while 
$$
\lim_{\theta \to 0_{+}} \mathcal{G}(f_{\alpha(\theta)}, f_{\alpha(-\theta)}) = e^{2 \pi i \theta_0}z^3 \frac{z - 2}{1 - 2z}  $$ and  $$ \lim_{\theta \to 0_{-}} \mathcal{G}(f_{\alpha(\theta)}, f_{\alpha(-\theta)}) = e^{-2 \pi i \theta_0} z^3 \frac{z - 2}{1 - 2z}
$$ with $ \theta_0 = 0.04892344$.  See Figure 1 and Figure 2 for their Julia sets.

Now let us give an another example to show how two polynomial Julia sets are glued together
along the gluing curve.  Take  $\alpha =   0.72863  + 0.74796 i $
   and $  \beta = 4i/3  $.   The free critical point of $f_\alpha$ belongs to  $\partial \Omega_\alpha$
   and has an infinite forward orbit, and the free critical point of $f_\beta$ belongs to a parabolic basin attached to $\partial \Omega_\beta$. Then
$$
\mathcal{G}(f_\alpha, f_\beta)  = z^3 \frac{z-a}{z-b}
$$
with $a = 1.12078 +1.12078 i$ and  $b = -0.12239 +0.08603i$.

 \vspace{0.5cm}

\begin{figure}[htbp]
 \centering
 \begin{minipage}[t]{0.48\textwidth}
   \includegraphics[width=\textwidth]{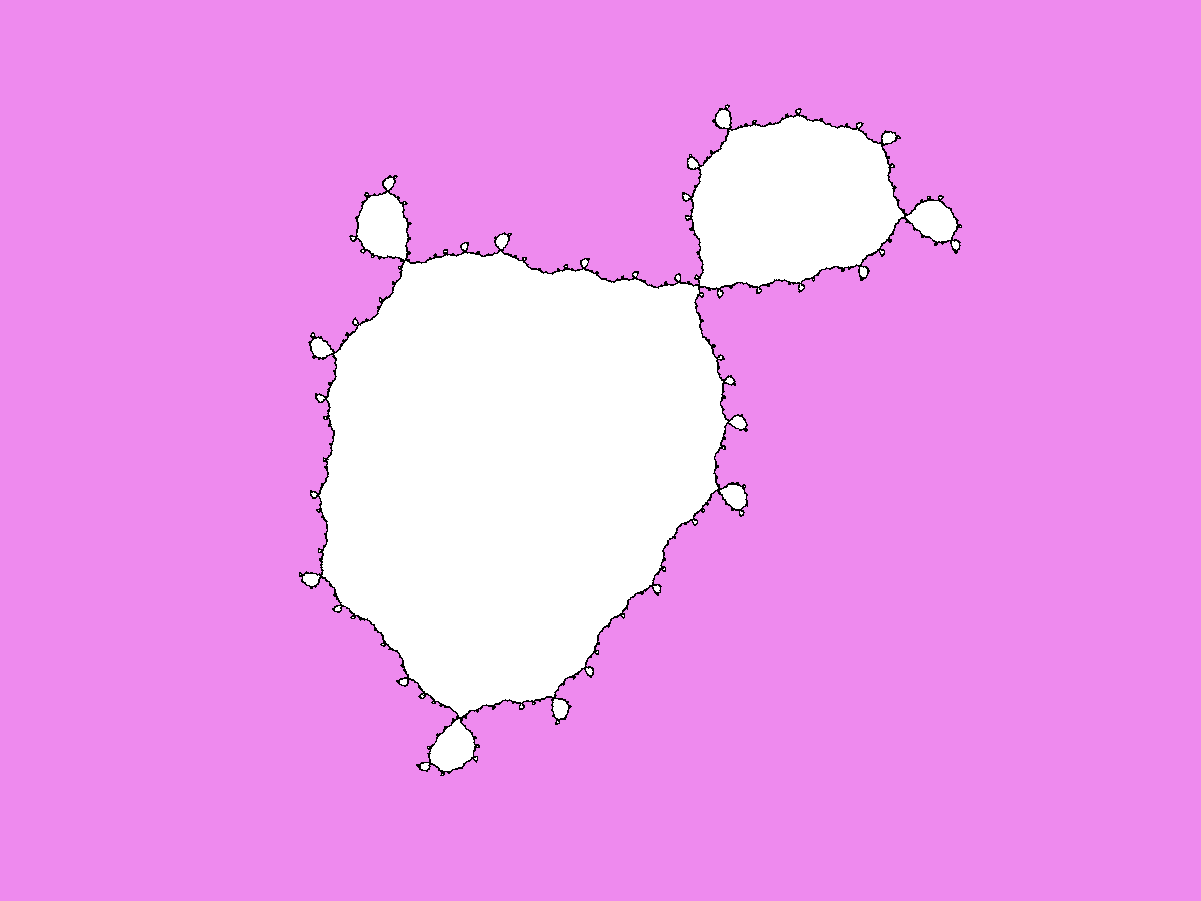}
   \caption{The Julia set for $ f_\alpha $. }
   \label{fig:image1}
 \end{minipage}
 \hfill
 \begin{minipage}[t]{0.48\textwidth}
   \includegraphics[width=\textwidth]{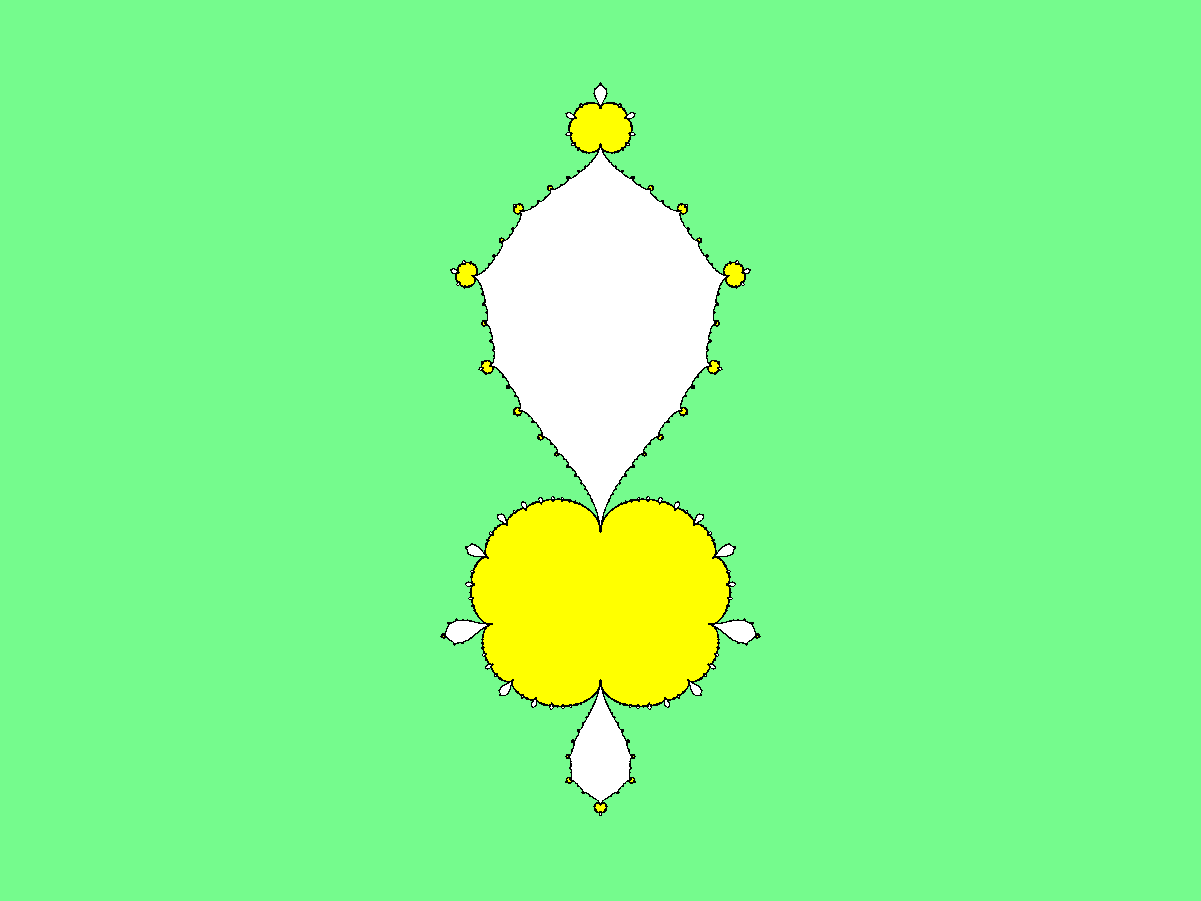}
   \caption{The Julia set for $ f_\beta $. }
   \label{fig:image2}
 \end{minipage}
\end{figure}
 \begin{figure}[!htpb]
\setlength{\unitlength}{1mm}
\begin{center}

\includegraphics[width=70mm]{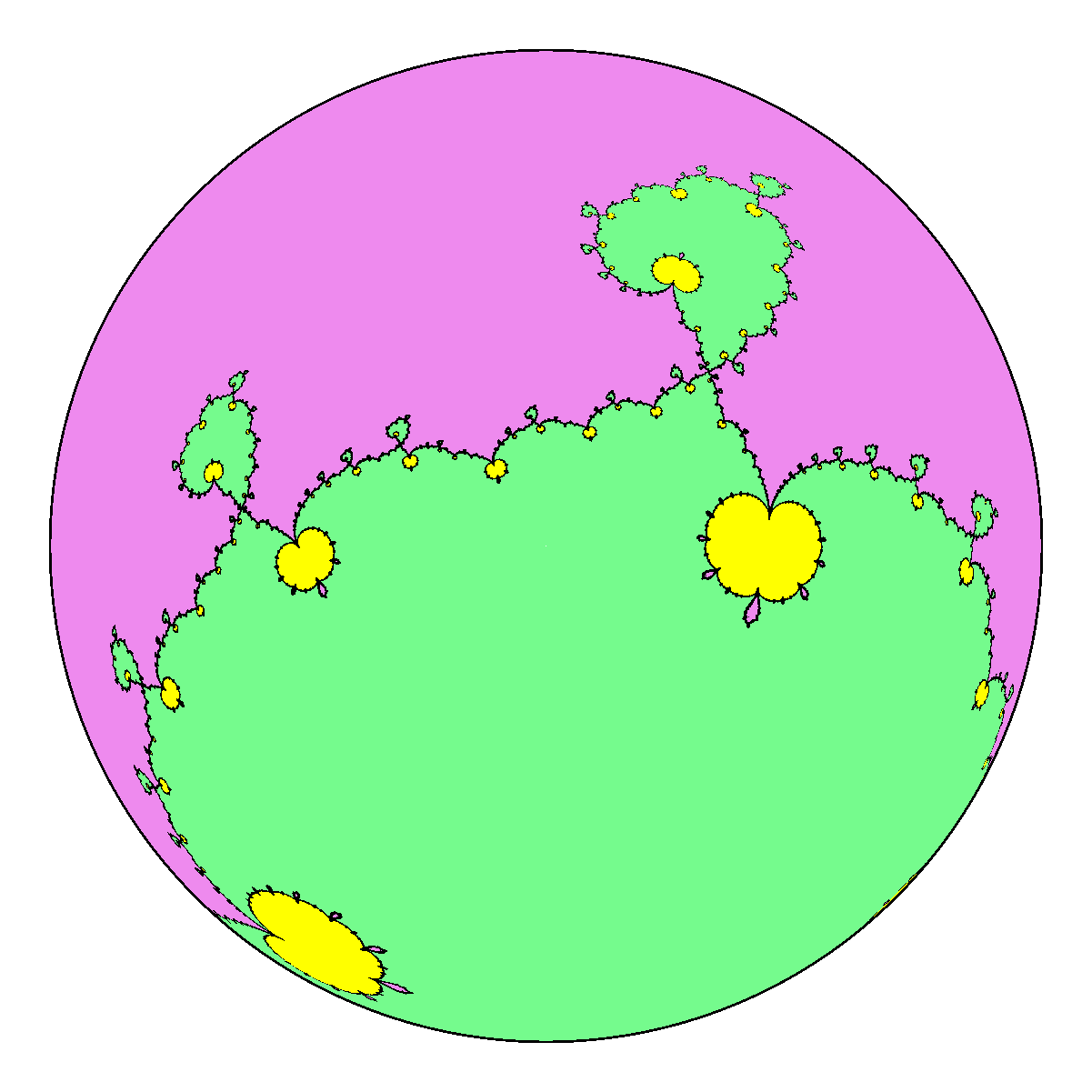}
\caption{The Julia set for $\mathcal{G}(f_\alpha, f_\beta)$. When viewed from  infinity  and the origin, one sees respectively
the Julia sets for $f_\alpha$  and $f_\beta$. }
\label{Figure 1}

\end{center}
\end{figure}

\subsection{Strategy of the proof}

The core of our strategy lies in approximating post-critically infinite maps by post-critically finite ones.
While the general idea of approximation is standard,  it is not always successful,
  largely  due to various  uncontrollable factors that arise  under  the limit process.  To overcome this, we utilize  puzzle structure  and tools from non-renormalizable holomorphic dynamics to ensure that the key  dynamical properties are preserved in the limit.

Concretely,
  we  will  approximate a non-renormalizable polynomial     by  hyperbolic ones with respect to combinatorics.   The idea is as follows.
   Suppose   $(f, g) \in \mathcal{N}_{d_1, d_0} \times \mathcal{N}_{d_2, d_0}$  and   let  $F$ be  the
   topological gluing  of  $ {f}$ and $ {g}$.  Since $F$ behaves like $f$  outside the gluing curve and like $g$  inside, we can choose  an appropriate set of
   equipotential curves and periodic rays in the immediate basins of  infinity and the origin, as well as all other super-attracting cycles. These consist of a connected graph $I$ which yields  a puzzle structure for $F$, as well as   puzzle structures for both $f$ and $g$.

For each $N \ge 1$, we modify $f$  only within the interiors  of  the critical puzzles of depth $N$  so that every  critical point in the Julia set becomes   periodic or eventually enters into some periodic critical orbit. The resulted topological polynomial is thus hyperbolic and is denoted by  $\tilde{f}$. The same process applies to $g$ and the resulted topological polynomial is denoted by $\tilde{g}$.
   By \cite{Zhang}    we can realize  the geometric gluing of $\tilde{f}$ and $\tilde{g}$, denoted as $G_N$. By construction, $G_N$  has the same puzzle structure as $F$ up to  depth  $N$.  In particular,    this argument  provides an alternative    proof that  any non-renormalizable polynomial can be approximated by hyperbolic polynomials,   a result previously established    in \cite{KS} via an analytic approach.

    A key ingredient in the next step   is to show that the sequence  $\{G_N\}$ is compactly contained in  $\mathcal{R}_{d_1 + d_2 - d_0}$.   After that,  we  may  assume that   $G_N \to G$ by taking a subsequence if necessary. It follows that $G$ has the same puzzle structure as $F$. Since $f$ and $g$, and thus $F$ and $G$ are non-renormalizable, as a consequence of non-renormalizable holomorphic dynamics,  points  in $\mathcal{J}_f$ and $\partial B_\infty(G)$ are represented as   nested puzzles whose diameters converge  to zero. The correspondence between the puzzles for $F$ and $G$ yields  a one-to-one correspondence between the points in $\mathcal{J}_f$ and $\partial B_\infty(G)$, which induces a homeomorphism  preserving  the dynamics. The same argument applies to $\mathcal{J}_g$ and $\partial B_0(G)$. It will be seen that the points in $\mathcal{J}_G$,  which can be viewed from both infinity and the origin, form a Jordan curve. This is the   curve along which the two polynomial Julia sets are glued onto  the sphere.

$\bold{Acknowledgement}$.  We would like to thank    Yongcheng Yin   for many inspiring discussions in the early stage of this work.  We further thank Fei Yang,  who provides  all the computer-generated  pictures in this paper.

\tableofcontents

\section{The gluing operator $\mathcal{G}$ for PCF polynomials}
  Suppose $f$ and $g$ are two rational maps of degree $d_1$ and $d_2$,  respectively,  each with  an immediate super-attracting basin, say $D_f$ and $D_g$, whose boundaries are Jordan curves   and which have    the same degree $d_0$. We further assume that the other critical orbits of $f$(resp. $g$)
   do not enter $D_f$ (resp. $D_g$).
Now we can construct a topological map $F: S^2 \to S^2$ by gluing $f$ and $g$ along  $\partial D_f$ and $\partial D_g$.  More precisely, let $\Delta$ be the unit disk and let $\phi: D_f \to \Delta$, $\psi: D_g \to \Delta$ be the holomorphic isomorphisms that  conjugate $f$ and $g$ to the power map $z \to z^{d_0}$. For each $1 \le k \le d_0-1$, there is a homeomorphism

\[\label{m} \Phi = \phi^{-1} \bigg{(}\frac{e^{2 k \pi i/(d_0-1)}} {\psi}\bigg{)}: \partial D_g \to \partial D_f,\]  which reverses the orientation.  We   extend $\Phi$ to a homeomorphism of the sphere so that it maps $\overline { D_g}$ to $ D_f^c$ and maps $D_g^c$ to $\overline {D_f}$.  We then   glue  $ D_f^c$ and $D_g^c$ along their boundaries by identifying $x$ and $\Phi(x)$.  The topological space obtained in this way,
$$
X = D_f^c \bigsqcup_{x \sim \Phi(x)} D_g^c,
$$  is homeomoprhic to $S^2$.   Now we  define a topological map

$$
F: X \to X
$$
by setting
\[\label{oo}
F(z) =
\begin{cases}
f(z) & \text{ for $z \in D_f^c$ and $f(z) \in D_f^c$}, \\
\Phi^{-1}\circ f(z) & \text{ for $z \in D_f^c$ and $f(z) \in D_f$}, \\
g(z) & \text{ for $z \in D_g^c$ and $g(z) \in D_g^c$}, \\
\Phi\circ g(z) & \text{ for $z \in D_g^c$ and $g(z) \in D_g$}.
\end{cases}
\]
We call $F$  the $\emph{topological gluing}$ of $f$ and $g$.  Using the coordinate charts $\overline{D_f}^c$ and ${\overline{D_g}}^c$,  $F$ is holomorphic whenever  a point  is mapped to   the same side of the gluing curve.   In the case that $F$ is post-critically finite and has no Thurston obstructions, by Thurston's  theorem \cite{DH},
there is a rational map $G$ which is combinatorially equivalent to $F$ and thus realizes the simultaneous uniformization of $f$ and $g$.  In this case, we  say $G$ is the $\emph{geometric gluing}$ of $f$ and $g$   and denote
$$
G = \mathcal{G}(f, g).
$$

  It was proved by Zhang that, for    post-critically finite polynomials $f$ and $g$, the topological map $F$ obtained in this way has no Thurston obstructions.  Therefore, $f$ and $g$
  can be always  glued into a rational map $G$.
\begin{thm}[Zhang, \cite{Zhang}]\label{THZ}
Let  $f \in\mathcal{P}_{d_1, d_0}$ and $g \in\mathcal{P}_{d_2, d_0} $  be two post-critically finite polynomials satisfying the independence condition. Then they can be glued  into a rational map $G \in \mathcal{R}_{d_1 + d_2 - d_0}$.
\end{thm}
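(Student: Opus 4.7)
The plan is to invoke Thurston's characterization theorem for postcritically finite branched self-coverings of $S^2$ \cite{DH}. Under the stated hypotheses, the topological gluing $F$ is a PCF branched cover of degree $d_1+d_2-d_0$; the independence condition guarantees $P_F \cap \partial D_f = \emptyset$, so $P_F$ decomposes as the disjoint union $(P_f \setminus \overline{D_f}) \sqcup (P_g \setminus \overline{D_g})$, with one part lying in each open hemisphere. Because $F$ carries super-attracting fixed points in both hemispheres (coming from the super-attracting fixed points at $\infty$ of $f$ and $g$), its Thurston orbifold has hyperbolic signature, so the theorem reduces to showing that $F$ admits no Thurston obstruction.

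Suppose for contradiction that $\Gamma = \{\gamma_1, \ldots, \gamma_n\}$ is such an obstruction, and choose it so that each $\gamma_i$ is in minimal position with respect to the gluing curve $\partial D_f = \partial D_g$. The gluing curve is totally $F$-invariant, and $F|_{\partial D_f}$ is conjugate to the degree-$d_0$ covering $z \mapsto z^{d_0}$ of $S^1$. The main step is to arrange that every $\gamma_i$ is disjoint from $\partial D_f$: any transverse intersection of $\gamma_i$ with $\partial D_f$ would pull back under $F$ to $d_0$ intersection points on $\partial D_f$, and iterated pullbacks would accumulate intersections, so a bigon-reduction argument combined with the expansion of $F$ on the gluing curve produces a homotopy decreasing intersection number, contradicting minimality.

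Once $\Gamma$ is disjoint from $\partial D_f$, it decomposes as $\Gamma = \Gamma_f \sqcup \Gamma_g$ with each component lying in a single open hemisphere. Since preimages under $F$ of curves in $D_f^c$ remain in $D_f^c$ (and similarly for $D_g^c$), the Thurston transition matrix $\Lambda_\Gamma$ is block-diagonal with blocks $\Lambda_{\Gamma_f}$ and $\Lambda_{\Gamma_g}$. Viewed in $\EC \setminus P_f$, the multicurve $\Gamma_f$ (after discarding the components that become null-homotopic or peripheral when the origin is restored to the complement) is $f$-stable and would give a Thurston obstruction for $f$; but $f$ is already its own rational realization, so by Thurston's theorem $f$ admits no obstruction and $\lambda(\Lambda_{\Gamma_f}) < 1$. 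Symmetrically, $\lambda(\Lambda_{\Gamma_g}) < 1$. Hence $\lambda(\Lambda_\Gamma) < 1$, contradicting the assumption that $\Gamma$ is an obstruction.

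The main technical obstacle will be the bigon-reduction step, because $F$ is only piecewise smooth across the gluing curve: one must adapt the standard surface-topology bigon argument to the orientation-reversing identification $\Phi$ on $\partial D_f = \partial D_g$ and genuinely exploit the expansion of $z \mapsto z^{d_0}$ on the gluing circle to force the desired contradiction with minimality. Once this normalization is in place, the splitting into polynomial sub-problems and the appeal to Thurston's theorem for $f$ and $g$ individually are essentially formal.
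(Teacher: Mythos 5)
The paper does not prove this theorem; it is quoted as an external result from \cite{Zhang} (``Jordan mating is always possible for polynomials''), and the only comment offered is that $F$ has no Thurston obstruction, after which Thurston's theorem is invoked. So your overall strategy --- reduce to showing no obstruction, then appeal to \cite{DH} --- matches the stated approach, and the block-diagonal decomposition of $\Lambda_\Gamma$ once $\Gamma$ is disjoint from the gluing curve is a standard, correct reduction (up to a small imprecision: literal $F$-preimages of a curve in $\operatorname{int}(D_f^c)$ need not lie in $D_f^c$, since a curve can have preimage components inside the bounded preimage Fatou components of $D_g$; the independence condition shows those components avoid $P_F$, so the resulting preimage curves are inessential and do not affect $\Lambda_\Gamma$, but you should say so).

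The genuine gap is in the step you yourself flag as the ``main technical obstacle,'' and it is more serious than a normalization to be cleaned up. The argument you sketch --- intersections with the $F$-invariant, degree-$d_0$ expanding gluing curve ``accumulate'' under pullback, so a bigon/minimal-position reduction forces the obstruction off the gluing curve --- applies verbatim to the formal mating of two degree-$d$ polynomials: the equator there is also totally $F$-invariant and conjugate to an expanding circle endomorphism ($z \mapsto z^{d}$). Yet classical quadratic matings of conjugate maps in the $1/3$-limb \emph{do} carry Levy-cycle obstructions whose curves cross the equator essentially; the intersection count with the equator is preserved, not grown, because the excess preimage intersections are carried by inessential or peripheral components of the pullback rather than by the essential component in the isotopy class. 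So ``expansion on the invariant curve plus bigon reduction'' cannot, by itself, force $\Gamma$ off the gluing curve, since it would equally well ``prove'' that all polynomial matings are unobstructed, which is false. Whatever makes Jordan gluing along $\partial D_f = \partial D_g$ different from mating along the circle at infinity --- for instance the fact that the gluing curve here is the common boundary of two bounded Fatou disks, that the degree $d_0$ on the curve is strictly smaller than $d_1$ and $d_2$, or that the independence condition pins down the combinatorics across the curve --- is precisely the ingredient that must appear in the disjointness step, and your outline does not identify or use it. Until that is supplied, the reduction to $f$- and $g$-obstructions is not reached, and the proof is incomplete at the exact point where \cite{Zhang} does its real work.
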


In the following we need a slightly more general version of the theorem that allows for attracting cycles. In this case,  one may use a surgery to convert  the cycle to a super-attracting cycle, and revert the change by surgery after the gluing.  Thus,  in what follows  we   assume that all the attracting cycles are super-attracting.

\section{Puzzle structure for the topological gluing}

\subsection{Puzzle construction for polynomials}
Before   constructing the puzzle system for the topological gluing, let us give a brief  overview of the construction of a puzzle system for a polynomial. We refer the reader to \cite{B}\cite{BH}\cite{H}\cite{Mil}\cite{Yo} for a detailed definition.
 Suppose $f$ is a polynomial of degree $d \ge 2$ and with no indifferent cycles.  Let $I$ be the graph consisting of an appropriate set  of   periodic external rays  and an equipotential curve,
 as well as periodic  internal rays and inner equipotential curves for attracting  basins. Let $I_n$ be the pull back of $I$ by $f^n$ for $n \ge 1$. Each bounded component of $\Bbb C - I_n$ is called a puzzle of depth $n$.   For $z \in \mathcal{J}_f$, let $P_n(z)$ denote the puzzle of depth $n$ which contains $z$. Then any two puzzles are either disjoint or nested. Moreover,
 $P_{n+1}(z) \subset P_n(z)$ and $f(P_{n+1}(z)) = P_n(f(z))$ for all $n \ge 0$.

We say a critical point $c$ is  $\emph {renormalizable}$ with respect to $\mathcal{P}$ if   there exists  an integer $p \ge 1$,  such that
 $$
 P_{n}(c)  = P_{n}(f^{kp}(c))
 $$  for all $k,  n \ge 0$.    Otherwise we say $c$ is $\emph{non-renormalizable}$ with respect to $\mathcal{P}$. We say $c$ is $\emph{non-renormalizable}$ if it is  non-renormalizable with respect to any puzzle system  $\mathcal{P}$.
Recall  that $\mathcal{N}_{d, d_0}$ denotes  the space  of polynomials in $\mathcal{P}_{d, d_0}$  with  no indifferent cycles and whose
 critical points in the Julia set are all  non-renormalizable.

\subsection{Puzzle construction for topological gluing}
Let $f \in \mathcal{N}_{d_1, d_0}$ and $g \in \mathcal{N}_{d_2, d_0}$.
Let $F$ be the topological  gluing of $f$ and $g$ as described in $\S2$. Then $F$ has an invariant curve $\Gamma$ so that $F: \Gamma \to \Gamma$ is conjugate to $z \mapsto z^{d_0}$.   Moreover,  it behaves like $f$  outside    $\Gamma$
and like $g$   inside.  So we can speak of rays and equipotential curves for $F$.  Take a set of  periodic orbits $\{z_1, \cdots, z_p\}$ in $\Gamma$ such that for each $z_i$
there are exactly two rays of $F$ landing at $z_i$, one starting from  infinity and the other  from the origin.    Let $R_i$ denote the union of the two rays. Let $E_\infty$ and $E_0$ be two equipotential curves of $F$  that belong to the outside and inside of $\Gamma$,  respectively.  For attracting cycles of $f$ and $g$ that  may exist, as in \cite{RY}, we  may take appropriate sets of
internal rays,  inner equipotential curves,  and  external rays, say $\mathcal{R}_{int}$, $\mathcal{E}_{int}$ and
 $\mathcal{R}_{ext}$ , so that
\begin{equation} \label{RE}
I = \{R_i, 1 \le i \le p, E_\infty, E_0\} \cup \mathcal{R}_{int} \cup \mathcal{E}_{int} \cup \mathcal{R}_{ext}
\end{equation}
is an admissible initial     graph of $F$. We may assume that the rays in $I$ do not land on the post-critical points.

Let $\mathcal{C}$ denote the set of all points in super-attracting cycles. In particular, $\{0, \infty\} \subset \mathcal{C}$.

\begin{defi}
For $n\ge 0$, let $I_n = F^{-n}(I)$. The components
of $\widehat{\Bbb C} - I_n$,  that  do not contain any point in $F^{-n}(\mathcal{C})$,  are called puzzles of depth $n$. We denote by $P_n(z)$  the puzzle of depth $n$ that contains $z$.
\end{defi}

 Let $\mathcal{P}$ denote the puzzle system induced by $I$. By definition we readily obtain
\begin{lema}\label{pp}
The following properties hold.
\begin{itemize}
\item $P_{n+1}(z) \subset P_{n}(z)$ and $F(P_{n+1}(z))  =  P_{n}(F(z))$  for all $z$ and  $n \ge 0$;
\item any two puzzles in $\mathcal{P}$ are either nested or disjoint;
\item $I_n$ is connected  for all $n \ge 0$, hence all puzzles in $\mathcal{P}$ are simply connected.
\end{itemize}
\end{lema}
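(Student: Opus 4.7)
My plan is to prove the three properties by induction on the depth $n$, exploiting two key facts about the construction of $I$: the rays in $I$ are $F$-periodic (hence forward-invariant as a collection), while the equipotentials $E_\infty$, $E_0$ (and those in $\mathcal{E}_{int}$) satisfy the weaker property that $F^{-1}(E_\infty)$ lies strictly between $E_\infty$ and $\partial B_\infty(F)$, and analogously for the other equipotentials. Consequently, the open region $U_0 \subset \widehat{\mathbb{C}}$ in which the depth-$0$ puzzles live satisfies $F^{-1}(U_0) \subset U_0$, so the regions $U_n := F^{-n}(U_0)$ form a strictly decreasing nested sequence.

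For property (1), I fix $z$ and use that $F(R_i) = R_{i'}$ for some $i'$, whence $R_i \subset F^{-1}(R_{i'}) \subset I_1$; the same argument applies to the rays in $\mathcal{R}_{int}$ and $\mathcal{R}_{ext}$. Thus $P_1(z)$ lies on the same side of every ray of $I$ as $z$; combined with $P_1(z) \subset U_1 \subset U_0$ this gives $P_1(z) \subset P_0(z)$, and iterating yields $P_{n+1}(z) \subset P_n(z)$. The dynamical identity $F(P_{n+1}(z)) = P_n(F(z))$ is then immediate: $F$ restricts to a proper open branched covering of $\widehat{\mathbb{C}} - I_{n+1}$ onto $\widehat{\mathbb{C}} - I_n$ of degree $d := d_1 + d_2 - d_0$, so it sends each connected component surjectively onto the component containing its image. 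The image is a puzzle because any $w \in P_n(F(z)) \cap F^{-n}(\mathcal{C})$ would lift under $F$ to a point of $P_{n+1}(z) \cap F^{-(n+1)}(\mathcal{C})$, contradicting that $P_{n+1}(z)$ is a puzzle. Property (2) then follows formally: if $v \in P_n(z) \cap P_m(w)$ with $n \leq m$, iterating (1) gives $P_m(w) = P_m(v) \subset P_n(v) = P_n(z)$, so any two puzzles are nested or disjoint.

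For property (3), I would prove $I_n$ is connected by induction on $n$. The base case $n = 0$ holds by construction: every $R_i$ passes through both $\infty$ and $0$ (so $\bigcup_i R_i$ is connected), the equipotentials $E_\infty$ and $E_0$ cross the rays and so attach to the ray skeleton, and the auxiliary graphs for the other super-attracting cycles are attached via the external rays in $\mathcal{R}_{ext}$ landing on those basins' boundaries. For the inductive step, I would argue that every complementary face of $I_n$ is a topological disk: it is a preimage component, under the branched covering $F$, of a face of $I_{n-1}$ (a disk by induction), and a branched cover of a disk is a disjoint union of disks by Riemann--Hurwitz applied componentwise. Once every face of $I_n$ in $S^2$ is a disk, the standard Euler-characteristic identity $V - E + F = 2$ forces $I_n$ to be a single component. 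Simple connectivity of each puzzle is then immediate.

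The main obstacle is establishing that every complementary face of $I_n$ is a disk. This is where critical points of $F$ lying in the Julia set, coming from free critical points of $f$ or $g$, must be handled: because their orbits are infinite in the non-renormalizable setting, they cannot be placed at vertices of $I_n$, so one must verify that the ramification they introduce inside a face never produces a multiply connected preimage component. Careful accounting via Riemann--Hurwitz on each face, together with the fact that a connected branched cover of a disk with the maximal possible total ramification is itself a disk, is what makes the argument go through.
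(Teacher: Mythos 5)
Your arguments for the first two bullets are sound: the nesting follows from the forward-invariance of the ray portion of $I$ together with $F^{-1}(U_0)\subset U_0$, and the identity $F(P_{n+1}(z))=P_n(F(z))$ plus the formal derivation of ``nested or disjoint'' are correct. The paper offers no written proof (it treats the lemma as immediate from the construction), so your expansion of those two items is a reasonable reconstruction.

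The third bullet is where there is a genuine gap, and you have in fact put your finger on it yourself as ``the main obstacle.'' The claim that ``a branched cover of a disk is a disjoint union of disks by Riemann--Hurwitz applied componentwise'' is false, and the proposed patch runs in the wrong direction. If $V\to W$ is a branched cover of degree $k$ with $W$ a disk and $V$ connected, Riemann--Hurwitz gives $\chi(V)=k-r$, where $r$ is the total ramification in $V$; $V$ is a disk exactly when $r=k-1$, an annulus when $r=k$, etc. Thus \emph{more} ramification produces a topologically \emph{more} complicated preimage, not a simpler one, so the phrase ``a connected branched cover of a disk with the maximal possible total ramification is itself a disk'' inverts the actual constraint and cannot close the argument; there is no a priori upper bound on $r$ that forces a disk, and concrete degree-two covers of a disk by an annulus exist. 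The reason $I_n$ is connected in this setting is not Riemann--Hurwitz but the concrete anchor structure of the graph: every ray of $I_n$ starting from $\infty$ contains $\infty$ in its closure, so these rays form a connected set, and likewise for the rays from $0$; the two ray systems are joined because, for each $R_i$, the $d_0$ preimages of its landing point $z_i$ lying on $F^{-n}(\Gamma)$ receive rays from both infinity and the origin (as $F\vert_\Gamma$ has degree $d_0$), so pulled-back rays from the two sides keep landing at common points; the equipotential curves $F^{-n}(E_\infty)$, $F^{-n}(E_0)$, $F^{-n}(\mathcal{E}_{int})$ are Jordan curves crossed by these rays, and the auxiliary pieces $\mathcal{R}_{int}$, $\mathcal{E}_{int}$ are attached to the rest of the graph through $\mathcal{R}_{ext}$. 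This direct connectivity of $I_n$ is what makes every complementary component of $I_n$ in $S^2$ simply connected, which is the statement you actually need; you need not argue face-by-face about branched covers at all.
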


   The portion of the initial graph $I$ lying  outside  the gluing curve  induces an
   initial graph  for $f$,  while  the portion   inside induces that for $g$. These initial graphs yield  puzzle systems for $f$ and $g$,  respectively. Let $\mathcal{P}_F$, $\mathcal{P}_f$, and $\mathcal{P}_g$ denote the puzzle systems for $F$, $f$ and $g$, respectively.   Because  $f$ and $g$ satisfy the independence condition,
   puzzles in $\mathcal{P}_F$   that  contain  critical orbits for $F$ correspond to
    puzzles  in  $\mathcal{P}_f$ and $\mathcal{P}_g$ that  contain  critical orbits for $f$ and $g$.  This correspondence  preserves the dynamics.    Since $f \in \mathcal{N}_{d_1, d_0}$ and $g \in \mathcal{N}_{d_2, d_0}$, all the critical points in the ``Julia set'' of $F$ , i.e., the critical points which  do not belong to the super-attracting cycles,  are non-renormalizable.

\vspace{0.2cm}
\noindent
$\bold{A\: toy \:model\: for\: the \:puzzle\: construction}$  Let us illustrate
  this puzzle construction for the pair of $f_a$ and $f_b$ in the family $f_\alpha: z \mapsto z^3 + \frac{3}{2}\alpha z^2$,   with $a$ and $b$ belonging to  $\partial \mathcal{H}_0$  and having  characteristic angles
$7 \pi/6$ and $11 \pi/6$,  respectively.  That is, the forward orbits of the two  critical points  on the gluing curve are given by the angle doubling map
$$
7 \pi/6 \to   \pi/3 \to 2 \pi/3 \to 4 \pi/3 \to 2 \pi/3,
$$  and
$$
11 \pi/6 \to  5 \pi/3 \to  4 \pi/3 \to 2 \pi/3 \to 2 \pi/3.
$$

 The initial  graph $I$ is the union of two equipotential curves, one lying in the neighborhood of  infinity and the other  in the neighborhood of the origin, together with  two fixed rays (one from  infinity  and one  from the origin) that     land on the unique fixed point on the gluing curve.  Thus  there is  only one puzzle of depth $0$, namely $P_{0,1}$.  The pull back of $P_{0,1}$ yields
two puzzles of depth $1$, denoted $P_{1,i}$ for  $i = 1, 2$,  whose  pull back  yields
 six puzzles of depth $2$, denoted $P_{2,i}$ for $1 \le i \le 6$.  The  orbit relations among these puzzles are
$$
P_{2,1} \to P_{1,2} \to P_{0,1},\:\: P_{2,6} \to P_{1,2}\to P_{0,1}, \:\: P_{2,3} \to P_{1,2} \to P_{0,1}, $$$$ P_{2,4} \to P_{1,1} \to P_{0,1},\:\: P_{2,5} \to P_{1,1} \to P_{0,1}, \:\: P_{2,2} \to P_{1,1} \to P_{0,1}.
$$
In  this model,  $\mathcal{C} = \{0, \infty\}$,  and  the white region consists of those  components that  contain  $\infty$, the origin and their pre-images.

\begin{figure}[!htpb]
\setlength{\unitlength}{1mm}
\begin{center}

\includegraphics[width=80mm]{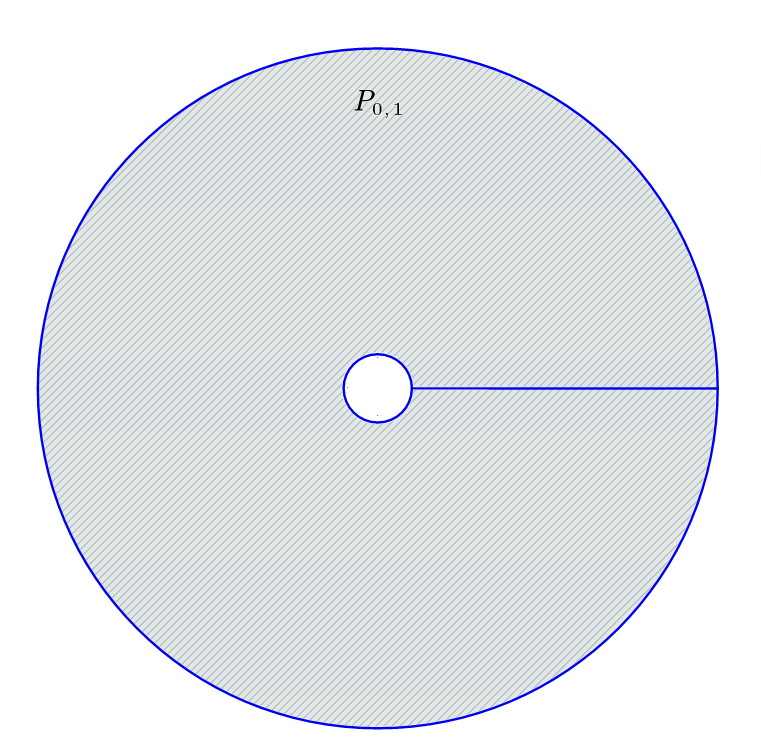}
\caption{Puzzles  of  depth $0$.}
\label{Figure 1}

\end{center}
\end{figure}

\begin{figure}[!htpb]
\setlength{\unitlength}{1mm}
\begin{center}

\includegraphics[width=80mm]{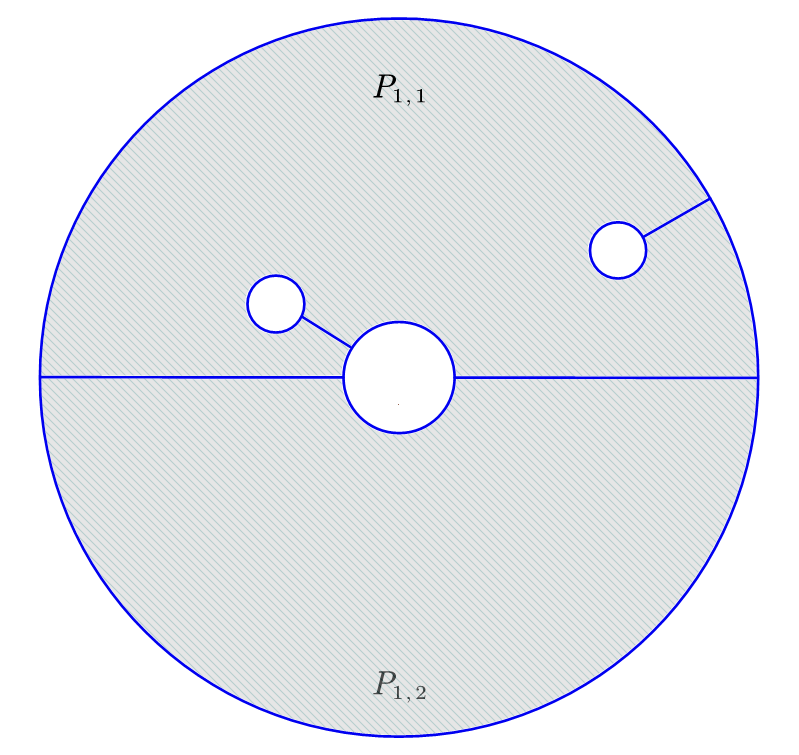}
\caption{Puzzles  of  depth $1$.}
\label{Figure 1}

\end{center}
\end{figure}

\begin{figure}[!htpb]
\setlength{\unitlength}{1mm}
\begin{center}

\includegraphics[width=80mm]{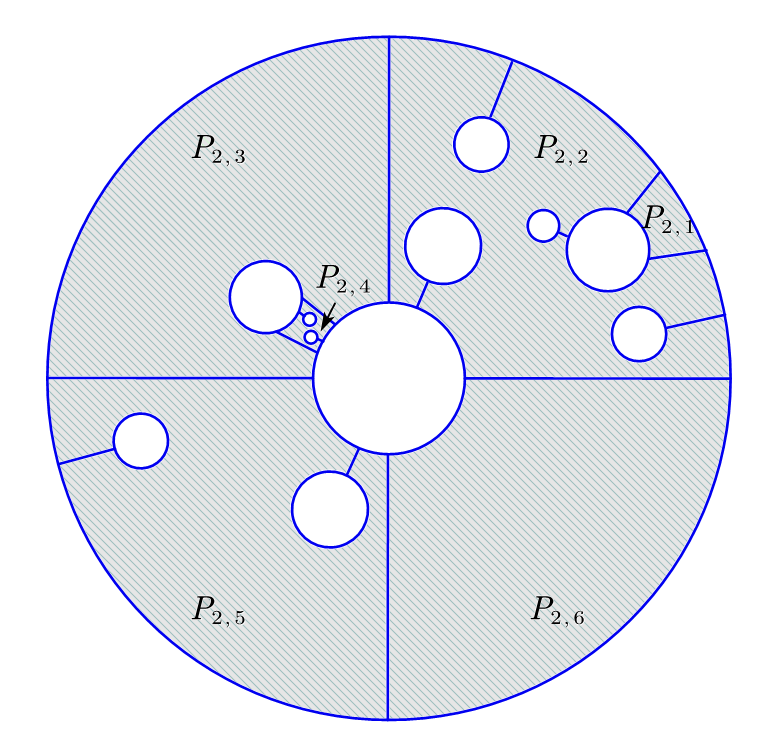}
\caption{Puzzles  of  depth $2$.}
\label{Figure 1}

\end{center}
\end{figure}

Puzzle systems  are usually established for holomorphic maps to study the topology and the geometry of the Julia sets, see \cite{B}\cite{BH}\cite{H}\cite{L}\cite{Mil}\cite{P1}\cite{P2}\cite{PZ} \cite{Yo}   for polynomials and \cite{CDK}\cite{D1}\cite{D2}\cite{QWY}\cite{R3}\cite{R2}\cite{WYZ}   for rational maps. This is by no means an exhaustive  list.
The puzzle construction here is purely topological.  We use it to record  the topological data that characterize the rational map realizing  the geometric gluing.  However,  typically  the topological gluing here is post-critically infinite, a fact lies  beyond the scope of Thurston's characterization theorem.    From the point of this view, this work is an initial effort to extend  Thurston's characterization theorem to non-renormalizable rational maps via  puzzle structures.

 Now let us return  to the real word to see how the puzzle structure for $F$ is used in our proof. In $\S4$ we   perturb $f$ and $g$ to obtain  hyperbolic topological polynomials  $f_n$ and $g_n$ in the sense of combinatorics (here hyperbolic means that all  critical points are periodic or enter into some critical periodic orbit). More precisely, we choose $f_n$ and $g_n$ so that the topological gluing of $f_n$ and $g_n$, denoted  $F_n$, has the same puzzle structure as  $F$ up to  depth $n$. Since $f_n$ and $g_n$ are hyperbolic, they are combinatorially equivalent to polynomials by Thurston's characterization theorem.  By Theorem~\ref{THZ}, for all $n \ge 1$,  we obtain   the geometric gluing of $f_n$ and $g_n$,
$$
G_n = \mathcal{G}(f_{n}, g_{n}).
$$
It  is clear that  for any $N\ge 1$,
$G_n$ and $F$ have the same puzzle structure  up to depth $N$ for all $n \ge N$.  In $\S5$ we  prove that $\{G_n\}$ lies in a compact family. By taking a subsequence if necessary,  we obtain  a limit map $G$.  By selecting  sufficiently many repelling periodic rays in the definition of the initial graph $I$, and disregarding some of them if necessary, we can assume that the rays in $I$  continue to land on repelling periodic orbits after taking the limit.      It follows  that $G$ and $F$ share  the same puzzle structure induced by $I$.   With the aid of the tools developed in non-renormalizable holomorphic dynamics, we can  show that  $G$ realizes the geometric gluing of $f$ and $g$.

\section{The closing Lemma}

The idea in this section is of independent interest. We shall see that it  can be used to give an alternative   proof that
  a non-renormalizable polynomial can be approximated by critically finite hyperbolic polynomials.
This was proved in \cite{KS} via an analytic approach. The goal here  is to establish the following

\begin{cls}
Let $f \in \mathcal{N}_{d, d_0}$.  Then for any $N \ge 1$,  one can  obtain a hyperbolic topological polynomial by perturbing $f$ only in critical puzzles of depth $N$,   in such a way  that every  critical point in the Julia set either becomes periodic or enters a  periodic cycle that
 contain  critical points.
\end{cls}

The basic idea  of the  proof
 is to close the orbit of  a  candidate point near the critical point.       As noted in $\S3$,  the initial graph $I$ for $F$ induces puzzle systems for both $f$ and $g$.    More precisely,  take
   the  part of $I$ in (\ref{RE})  lying outside     the gluing curve, and replace the part of $I$   inside    of the gluing curve by  an inner equipotential curve together with a group of corresponding internal rays.  The new graph is an initial
  graph for  $f$  and  generates a puzzle structure for $f$.
  Choose  an $N \ge  1$ large enough so that every  puzzle of depth $N$  contains   at most one critical point in the Julia set.  Here we use the assumption that $f$ is non-renormalizable with respect to this puzzle system.    Consequently,  any nest of puzzles shrinks to a single point\cite{KL1}\cite{KL2}\cite{KSS}\cite{KS}\cite{QY}.
\begin{lema}\label{cand}
Suppose $c$ is a critical point in the Julia set of $f$ and $N \ge 1$. Then there exist a $z_0 \in P_N(c)$ and a $k \ge 1$ such that the puzzles
$P_N(f^l(z_0))$ for  $0 \le l \le k-1$  are pairwise  disjoint   and $P_N(f^k(z_0)) = P_N(c)$.
\end{lema}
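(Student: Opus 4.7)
The plan is to take $z_0$ to be a point of $P_N(c) \cap \mathcal{J}_f$ whose forward orbit returns to $P_N(c)$ in the minimum possible number of steps, and then to deduce the pairwise-disjointness condition from this minimality via a pullback shortcut.

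The first task is to show that some return exists. Since $c$ lies in $\mathcal{J}_f$ and is not an exceptional point of the polynomial $f$ (recall $d>d_0$, so the super-attracting fixed point at the origin is not totally invariant), the backward orbit of $c$ is dense in $\mathcal{J}_f$. In particular there is a point $w \in P_N(c) \cap \mathcal{J}_f$ with $f^r(w) = c$ for some $r \ge 1$, which makes
$$
k := \min\bigl\{\, m \ge 1 : \exists\, z \in P_N(c) \cap \mathcal{J}_f \text{ with } f^m(z) \in P_N(c)\,\bigr\}
$$
well-defined. Choose any $z_0 \in P_N(c) \cap \mathcal{J}_f$ realizing this minimum. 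By the minimality, $f^m(z_0) \notin P_N(c)$ for $1 \le m \le k-1$, so the puzzle $P_N(z_0) = P_N(c)$ is already distinct from each $P_N(f^m(z_0))$ with $1 \le m \le k-1$.

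The heart of the argument is to rule out any further coincidence $P_N(f^l(z_0)) = P_N(f^{l'}(z_0)) =: Q$ with $1 \le l < l' \le k-1$. Assuming such $l<l'$ exist, both $f^l(z_0)$ and $f^{l'}(z_0)$ lie in $Q$, hence $f^{l'+1}(z_0) = f(f^{l'}(z_0)) \in f(Q) = P_{N-1}(f^{l+1}(z_0))$. The pullback identity $I_{N+l} = f^{-(l+1)}(I_{N-1})$ shows that $f^{l+1}$ restricts to a surjective branched covering from the puzzle $P_{N+l}(z_0)$ onto $P_{N-1}(f^{l+1}(z_0))$. Pick a preimage $z_0'' \in P_{N+l}(z_0) \subseteq P_N(c)$ of $f^{l'+1}(z_0)$; by total invariance of the Julia set, $z_0'' \in \mathcal{J}_f$. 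Then
$$
f^{k-(l'-l)}(z_0'') = f^{k-l'-1}\bigl(f^{l+1}(z_0'')\bigr) = f^{k-l'-1}\bigl(f^{l'+1}(z_0)\bigr) = f^k(z_0) \in P_N(c),
$$
and $1 \le k-(l'-l) < k$, contradicting the minimality of $k$. Since two puzzles of the same depth are either disjoint or coincide, pairwise distinctness upgrades to pairwise disjointness.

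The only delicate step is the branched-cover property for $f^{l+1}\colon P_{N+l}(z_0) \to P_{N-1}(f^{l+1}(z_0))$, and this is immediate from $I_{N+l} = f^{-(l+1)}(I_{N-1})$ together with the fact that $z_0 \in \mathcal{J}_f$ forces its depth-$(N+l)$ component to be a bounded puzzle rather than a Fatou component attached to some super-attracting cycle. I want to emphasize that the non-renormalizability of $f$ is not used in this particular lemma; it becomes essential only afterwards, when this selection of $z_0$ is applied simultaneously at every critical puzzle of depth $N$ to close all critical orbits and produce the advertised hyperbolic topological polynomial.
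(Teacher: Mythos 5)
Your proof is correct and follows essentially the same route as the paper: pick a point of $P_N(c)\cap\mathcal{J}_f$ whose orbit returns to $P_N(c)$ (you via density of the backward orbit, the paper via the eventually-onto property of $\mathcal{J}_f$), take $z_0$ and $k$ realizing the minimal first return time, and derive pairwise disjointness from that minimality. The only difference is that the paper asserts the last step with no detail, whereas your pullback shortcut supplies it explicitly — note you can simplify slightly by pulling back directly by $f^{l}$ (since $f^{l}\colon P_{N+l}(z_0)\to P_N(f^{l}(z_0))$ is onto by iterating $f(P_{n+1}(z))=P_n(f(z))$), obtaining $y\in P_N(c)\cap\mathcal{J}_f$ with $f^{\,k-(l'-l)}(y)\in P_N(c)$ without the detour through depth $N-1$.
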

\begin{proof} By the assumption that the rays in $I$ do not land on any post-critical point,  $c$ belongs to the interior of $P_N(c)$.
Since $c \in \mathcal{J}_f$, $P_N(c)$ is an open neighborhood of a Julia point.    By the eventually-onto property of the Julia set, there exists  a $k \ge 1$ with
$$
f^k(P_N(c) \cap \mathcal{J}_f) = \mathcal{J}_f.
$$
Hence  there are points in  $P_N(c) \cap \mathcal{J}_f$  whose forward orbit
 returns  to  $P_N(c)$. Choose  $z_0$  among  such  points so that the return time is minimal.
 The disjointness of the puzzles $P_N(f^l(z_0))$ for $0 \le l \le k-1$  then follows from  this minimality.

\end{proof}

\subsection{Proof of the Closing Lemma}

We  will modify $f$ into a topological polynomial so that
each critical point in the Julia set  either becomes periodic or enters  a periodic cycle containing   critical points,
   and each critical orbit meets  a   depth-$N$  puzzle at most once.

Beginning  with an arbitrary critical point
 $c$ in the Julia set,  let   $$ \mathcal{O}_c = \{P_N^0, \cdots, P_N^{k-1}\}$$
be   the finite set of depth-$N$  puzzles    and  $z_0$    the candidate point
   guaranteed  by Lemma~\ref{cand},  with  $z_l  = f^l(z_0) \in P_N^l$ for $0 \le l \le k-1$ and $z_k = f^{k}(z_0) \in P_N^0$.     We will  modify
$f$ only
  inside   $P_N^0$  to obtain a   topological polynomial $\tilde{f}$  with the following properties:
\begin{itemize}
\item $\tilde{f}$ has a critical point at $z_k$ with local degree $\deg \tilde{f}|_{z_k} = \deg f|_{c}$,
\item $\tilde{f}(z_k) = z_1$.
\end{itemize} This is always possible.  Indeed,  choose a Jordan curve $\gamma$
in $P_{N-1}(z_1)$ that  surrounds $v = f(c)$, $z_1$ and $z_{k+1}$. Since $c$ is the unique critical point in $P_N^0 = P_{N}(z_0)$, there is a unique component $\xi$ of $f^{-1}(\gamma)$ contained in $P_{N}(z_0)$ whose covering degree
$f: \xi \to \gamma$   equals   the local degree of $f$ at $c$. We may  modify $f$ only  inside $\xi$ so that $z_{k}$ becomes the critical point of   $\tilde{f}$ and  maps to $z_1$.    The same idea extends  to all the  subsequent modifications (see Figure 9  for   illustration).

\begin{figure}[!htpb]
\setlength{\unitlength}{1mm}
\begin{center}

\includegraphics[width=120mm]{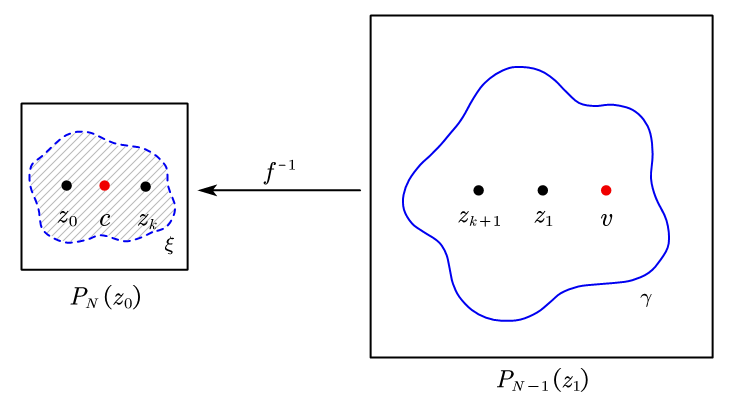}
\caption{Modifying $f$ only in the interior of $P_N(c)$.}
\label{Figure 1}

\end{center}
\end{figure}
Now if for some $1 \le l \le k-1$,
$P_N^l$ contains a critical point, say $c'$, we further  modify $\tilde{f}$ only inside  the  interior of
$P_N^l$ to  obtain a new topological polynomial (still denoted  $\tilde{f}$)  such that
\begin{itemize}
\item $\tilde{f}$ has a critical point at $z_l$ with $\deg \tilde{f}|_{z_l} = \deg \tilde{f}|_{c'}$, and
\item $\tilde{f}(z_{l}) = z_{l+1}$.
\end{itemize}
In this way we can  ensure that  all the critical points in the puzzles of $\mathcal{O}_c$  lie in  a single periodic orbit   $\{z_l, 1 \le l \le k\}$.

\begin{figure}[H]
\setlength{\unitlength}{1mm}
\begin{center}

\includegraphics[width=115mm]{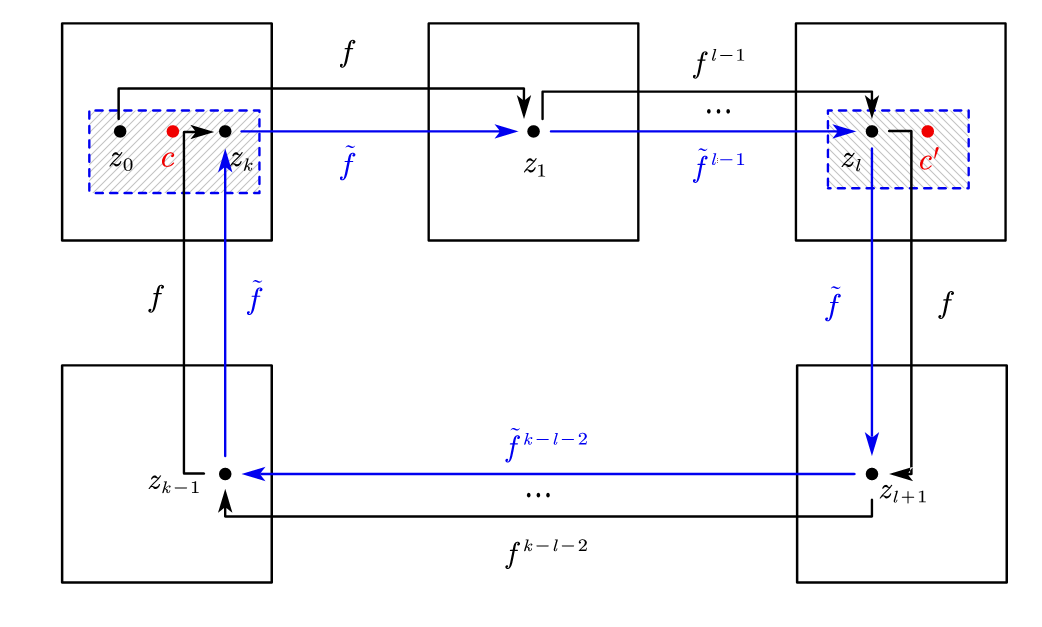}
\caption{Closing the orbit of the candidate point $z_0$.}
\label{Figure 1}

\end{center}
\end{figure}

By induction,  suppose we have a finite set of puzzles of depth-$N$ and a finite set of points,  $$\mathcal{T} = \{P_N^1, \cdots, P_N^n\}  \:\: \hbox{and}\:\: \mathcal{S} = \{z_1, \cdots, z_n\}$$ such that
\begin{itemize}
\item $z_i \in P_N^i$ for all $1 \le i \le n$;
\item for $1 \le i \le n$, $\tilde{f}(z_i) \in \mathcal{S}$;
\item if $P_N^i$ contains a critical point $c$ for some $i$, then $z_i = c$;
\item the forward orbit of each $z_i$ contains at least one critical point;
\item  $f$ and $\tilde{f}$ coincide outside the interiors of the critical puzzles in $\mathcal{T}$.
\end{itemize}

If  all the critical points of $\tilde{f}$ are periodic or eventually enter   some critical periodic orbit, we have done. Otherwise,
there is a  critical point $\tilde{c}$ in the Julia set of $f$  that  does  not belong to the puzzles in $\mathcal{T}$.   Let $\mathcal{O}_{\tilde{c}}$ be the finite set of puzzles and $\tilde{z}_0$  the candidate point  guaranteed by Lemma~\ref{cand}.

In the case that $\mathcal{O}_{\tilde{c}} \cap \mathcal{T} = \emptyset$, apply the previous
 argument for $\mathcal{O}_c$ to $\mathcal{O}_{\tilde{c}}$ and  obtain  a single critical periodic orbit contained in the puzzles of $\mathcal{O}_{\tilde{c}}$.
  Then  add $\mathcal{O}_{\tilde{c}}$ to $\mathcal{T}$ and add the corresponding critical   orbit to  $\mathcal{S}$.

In the case $\mathcal{O}_{\tilde{c}} \cap \mathcal{T} \ne \emptyset$, let
    $\tilde{P}_N^l$  be  the first puzzle in  $\mathcal{O}_{\tilde{c}}$ that  belongs to $\mathcal{T}$, i.e.,
   $$
   \tilde{P}_N^0, \cdots, \tilde{P}_N^l = P_N^m \in \mathcal{T}.
   $$   We may modify $\tilde{f}$ only inside  the interior
    of $\tilde{P}_N^0$
    so that $\tilde{c}$  remains  a critical point of $\tilde{f}$  with the same degree and $\tilde{f}^l(\tilde{c}) = z_m$.  In fact, since $\tilde{f}^l(\tilde{z}_0) \in P_N^m(z_m)$, we can choose   $w \in P_{N+l-1}(\tilde{f}(\tilde{z}_0))$ so that $\tilde{f}^{l-1}(w) = z_m$.  As before,  take a Jordan curve $\gamma$
    in $P_{N-1}(\tilde{f}(\tilde{z}_0))$   surrounding  $\tilde{f}(\tilde{z}_0)$,  $\tilde{v} = \tilde{f}(\tilde{c})$ and $w$.  Let $\xi$ be the component of $\tilde{f}^{-1}(\gamma)$ in $\tilde{P}_N^0 = P_N(\tilde{z}_0)$ that surrounds $\tilde{z}_0$, $\tilde{c}$ and $\tilde{w}$ with $\tilde{f}(\tilde{w}) = w$.
    We can modify $\tilde{f}$ only in the interior of $\xi$  so that $\tilde{c}$ remains a critical point of
    $\tilde{f}$ with the same degree, and moreover, $\tilde{f}(\tilde{c}) = w$.  Thus $\tilde{c}$ is eventually mapped to $z_m$ through
    $$
    \tilde{c},  w = \tilde{f}(\tilde{c}),  \cdots,   \tilde{f}^{l-1}(w)  = \tilde{f}^l(\tilde{c})   = z_m.
    $$
    
     \begin{figure}[H]
\setlength{\unitlength}{1.2mm}
\begin{center}

\includegraphics[width=125mm]{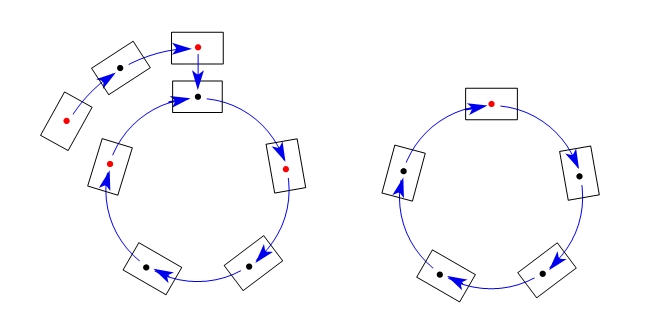}
\caption{The critical orbits for $\tilde{f}$. }
\label{Figure 1}

\end{center}
\end{figure}
For   $1 \le i \le l-1$, if some
    $\tilde{P}_N^i$ contains a critical point $c$, we may modify $\tilde{f}$ only inside  the interior
     of $\tilde{P}_N^i$ so that $\tilde{f}^i(\tilde{c})$  becomes the  critical point of $\tilde{f}$ in $\tilde{P}_N^i$ with the same degree  as $c$, and  is stilled mapped to $\tilde{f}^{i+1}(\tilde{c})$.    After this, we add  $\tilde{P}_N^0, \cdots, \tilde{P}_N^{l-1}$  to $\mathcal{T}$ and $\tilde{f}^i(\tilde{c}), 1 \le i \le l-1$, to  $\mathcal{S}$.

Since there are only finitely many critical points in $\mathcal{J}_f$, the  argument terminates  after finitely many steps.

\subsection{Handling the cluing curve}

 Note that a problem arises when we   modify $f$ in a puzzle   which intersects  the gluing curve. Such a modification may change the dynamics on the gluing curve, where the two dynamics   are supposed to coincide, and  the two new maps can no longer  be glued along the curve.

    To overcome this,  we   push the gluing curve inward  before  making  any  modification.   Concretely,
     suppose  the unit disk is the marked immediate  super-attracting basin and  $f$ is   $z \mapsto z^{d_0}$ there,  and    $|z| = r$ is the   inner equipotential curve used to construct the  depth-$N$ puzzles.  Take $r < \tilde{r} < 1$. We first push
     the gluing curve for $f$ to the circle $\{z: \: |z| = \tilde{r}\}$. To do this,
      modify $f$  in the annulus $r < |z| < 1$ so that
 \begin{itemize}
 \item  $f(t e^{i\theta})=  t e^{i d_0 \theta}$ for $ \tilde{r} \le  t \le 1$,
 \item   $f(t e^{i\theta})=  \eta(t)  e^{i d_0 \theta}$ for $ r  \le  t \le \tilde{r}   $,  where $\eta$ is a linear with  $\eta(r) = r^{d_0}$ and $\eta(\tilde{r}) = \tilde{r}$.
 \end{itemize}

  Now,   when we modify $f$ in a puzzle  that intersects the gluing curve,
 we can perform  the modification  only   outside   $\{z: \: |z| = \tilde{r}\}$. In fact,  since
 the points $z_{k+1}$, $z_1$ and $v$ in the right hand of Figure 9
  lie in  the Julia set,  the  Jordan curve $\gamma$ there  can be taken  outside   $\{z:\: |z| = \tilde{r}\}$,
    and thus the curve
  $\xi$
  also  lies    outside.    In particular, the
 circle $\{z:\: |z| = \tilde{r}\}$ is $\tilde{f}$-invariant  and  will serve as  the gluing curve for $\tilde{f}$.

$\emph{Note}$.  It is interesting  to consider the case when there are critical points lying on the gluing curve.
In this case, the polynomial is perturbed so that the critical points  either
lie on or enter into
  to a long periodic critical orbit near the gluing curve. We shall see,  in the limit,   that since
  the puzzle containing the critical point intersects the gluing curve and since the size of the puzzle tends  to zero, the critical point will be pushed back to the correct  position on  the gluing curve.

\subsection{Post-critically finite approximation}
Now apply the above process to  $g$ and obtain $\tilde{g}$.  Since both
 $\tilde{f}$ and $\tilde{g}$ are hyperbolic  topological polynomials, i.e. their
  critical points are all periodic or eventually mapped into periodic cycles containing critical points, they do not have Levy cycles and are therefore Thurston equivalent to polynomials.   Let $\tilde{F}$ be the topological
gluing of $\tilde{f}$ and $\tilde{g}$.  By Theorem 2.1,   $\tilde{F}$ is realized by some rational map $\tilde{G}$ in $\mathcal{R}_{d_1+d_2 -d_0}$. By scaling conjugation if necessary, we may assume that the coefficient $\gamma = 1$ for $\tilde{G}$, see (\ref{fr}).   By construction, $\tilde{F}$ and $F$, and hence  $\tilde{G}$ and $F$,  share  the same puzzle structure  up to  depth $N$, which can be described as follows.

First note that each super-attracting cycle of $\tilde{F}$ is a holomorphic attracting cycle, i.e.,  $\tilde{F}$ is holomorphic in an open neighborhood of  each    such cycle \cite{ZJ}.  Hence  we can take homeomorphisms $\Phi_0, \:\Phi_1: S^2 \to \widehat{\Bbb C}$ which are isotopic to each other, so that
$$
\Phi_0 \circ \tilde{F} =  \tilde{G} \circ  \Phi_1,
$$
and moreover,  $\Phi_0$ and $\Phi_1$  are holomorphic and identified with each other   in an open neighborhood of each
 super-attracting cycle \cite{ZJ}.    For each pair of periodic rays in the initial graph $I$, which connect two super-attracting periodic points  of $\tilde{F}$ (either connect infinity and the origin, or connect infinity and some super-attracting periodic point, or connect the origin and some super-attracting periodic point),
 we may assume that $\Phi_0$ maps   the two rays  to  two periodic rays of $\tilde{G}$ which start from the
   two corresponding super-attracting periodic points  of $\tilde{G}$, and which land on the same repelling periodic point of $\tilde{G}$.  To see this, let $R$ denote the union of the two rays and $p$ be their period.  First let $\Phi_0(R)$ be a smooth arc.
 By lifting $\Phi_0$ through the following diagram,

$$
 \begin{CD}
          {S^2}         @  > \Phi_{i+1}     >  >    \widehat{\Bbb C}        \\
           @V  \tilde{F}  VV                         @VV \tilde{G} V\\
         {S^2}           @  >\Phi_i   >  >        \widehat{\Bbb C}
     \end{CD}
     $$
we obtain a sequence  of arcs,  $\{\Phi_{ip}(R)\}_{i \ge 0}$,  all of which
 connect two super-attracting periodic points of $\tilde{G}$,  and are homotopy to each other in $\widehat{\Bbb C} - P_{\tilde{G}}$  with the two end points being fixed.  Since all $\Phi_i$ are holomorphic and identified with each other in an open neighborhood of the super-attracting cycles,
each of the two end parts of  $\Phi_{ip}(R)$ is a part of  periodic
  ray in the immediate basin at this end point. As we lift the diagram, the two ray segments are extended longer and longer.   On the other hand,   since the inverse branch of $\tilde{G}$ decrease the orbifold metric and since $\Phi_0(R)$ is smooth,
  as  we lift the diagram, any middle part of $\Phi_0(R)$ shrinks to a point. In other words,
  $\Phi_{ip}(R)$ converges to
    the union of two periodic  rays which start  respectively from the two  end  points  and land on the same repelling periodic point. Let $\tilde{R}$ be the union od the two rays. Then $\tilde{R}$
   is in the same homotopy class of $\Phi_0(R)$.  For this reason, we may assume in the beginning that $\Phi_0(R) = \tilde{R}$.
    We may further assume that  $\Phi_0$ maps the equi-potentials in $I$ to the corresponding equi-potentials for $\tilde{G}$.
   In this way $\Phi_0$ maps the initial graph $I$ for $\tilde{F}$ to an initial graph for $\tilde{G}$.
Now by lifting the above diagram,  the maps $\{\Phi_i\}$  maps the puzzle system   for $\tilde{F}$   to the puzzle system for $\tilde{G}$,
    with the puzzle dynamics being preserved.

    It follows that  $F$ and $\tilde{G}$
    have the same puzzle structure up to depth $N$ since this holds for  $F$ and $\tilde{F}$.     Since all $\Phi_i$ are identified with each other in an open neighborhood of super-attracting cycles, the Boettcher coordinate for each super-attracting cycle for $\tilde{F}$ induces      the Boettcher coordinate for the corresponding super-attracting cycle for $\tilde{G}$.  We summarise these as follows.

 \begin{prop}\label{rs} For each $n \ge 1$, there is a rational map $G_n \in \mathcal{R}_{d_1+d_2 - d_0}$ with $\gamma = 1$
 such that $F$ and $G_n$  has the same puzzle structure up to depth $n$ in the following sense. \begin{itemize}  \item There exist an open neighborhood of each super-attracting cycle of $F$ which contains all the equi-potentials in $I$,   and a homeomorphism $\Phi_0$ of the sphere which is holomorphic and conjugates $F$ to $G_n$ on  these neighborhoods, \item $\Phi_0$ maps each ray in $I$ onto the corresponding ray of $G_n$ with the same external or internal angle, and moreover, $\Phi_0$ conjugates $F$ to $G_n$ on these rays, \item   $\Phi_0$  can be lift through
 $\Phi_i \circ F = G_n \circ \Phi_{i+1}$ until $\Phi_n$ so that $\Phi_i = \Phi_{i+1}$ on $F^{-i}(I)$ for all $0 \le i \le n-1$.

 \end{itemize}.

\end{prop}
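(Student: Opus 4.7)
The plan combines the Closing Lemma of this section with Theorem \ref{THZ} and a careful arrangement of the Thurston equivalence. Given $n \ge 1$, first apply the Closing Lemma to $f$ with depth parameter $n$, producing a hyperbolic topological polynomial $\tilde f$ that agrees with $f$ outside the interiors of the depth-$n$ critical puzzles and in which every critical point is periodic or eventually mapped into a critical periodic cycle. As emphasized in the subsection on handling the gluing curve, the modification is performed entirely outside the invariant circle $\{|z| = \tilde r\}$, so the dynamics on the gluing curve is untouched. Apply the same construction to $g$ to obtain $\tilde g$, and let $\tilde F$ be the topological gluing of $\tilde f$ and $\tilde g$. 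By construction, $\tilde F$ coincides with $F$ outside the depth-$n$ critical puzzles, hence $\tilde F$ and $F$ share the same puzzle structure up to depth $n$. Since all critical points of $\tilde F$ are periodic or preperiodic to critical cycles, $\tilde F$ is combinatorially hyperbolic and has no Levy cycles, so by Theorem \ref{THZ} it is realized by a rational map in $\mathcal{R}_{d_1+d_2-d_0}$; after a scaling conjugation normalizing the leading coefficient to $\gamma = 1$, call this map $G_n$.

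It remains to arrange a Thurston equivalence $\Phi_0$, with its lift $\Phi_1$ satisfying $\Phi_0 \circ \tilde F = G_n \circ \Phi_1$, so that (a) on an open neighborhood of each super-attracting cycle of $\tilde F$, chosen large enough to contain all equi-potentials in $I$, the maps $\Phi_0$ and $\Phi_1$ coincide, are holomorphic, and conjugate $\tilde F$ to $G_n$, and (b) $\Phi_0$ sends each ray of $I$ to the $G_n$-ray of the same internal or external angle. Item (a) is available because each super-attracting cycle of $\tilde F$ is in fact a holomorphic attracting cycle \cite{ZJ}, allowing one to prescribe the equivalence on Boettcher charts. Item (b) uses the standard lifting--contraction argument already sketched in the exposition: for each pair of periodic rays $R \subset I$ of common period $p$, take $\Phi_0(R)$ initially to be any smooth arc joining the correct pair of super-attracting periodic points of $G_n$, and iterate the lift $\Phi_{ip}(R)$. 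End-segments stabilize because $\Phi_i$ is already holomorphic near the super-attracting cycles and grows along the emerging $G_n$-rays, while the middle portion of the arc contracts under the inverse branches of $G_n$, which strictly contract the orbifold metric on $\widehat{\mathbb{C}} \setminus P_{G_n}$. The limiting arc $\tilde R$ is a union of two $G_n$-rays landing on a common repelling periodic point, homotopic rel endpoints to $\Phi_0(R)$; replacing $\Phi_0(R)$ by $\tilde R$ from the outset realises (b), and the equi-potentials of $I$ are matched similarly.

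Once $\Phi_0$ has been arranged to satisfy (a) and (b), the successive lifts $\Phi_1, \Phi_2, \ldots, \Phi_n$ obtained from $\Phi_i \circ F = G_n \circ \Phi_{i+1}$ automatically satisfy $\Phi_i = \Phi_{i+1}$ on $F^{-i}(I)$: the equality $\Phi_0 = \Phi_1$ already holds on $I$ by (a) and (b), and pulling back the relation $\Phi_i \circ F = G_n \circ \Phi_{i+1}$ preserves this agreement along each preimage of $I$ up to depth $n$. The step I expect to be the most delicate is the ray-convergence argument in (b): one must verify that the holomorphic end-segments of $\Phi_{ip}(R)$ exhaust the full $G_n$-rays while the middle segment collapses to the common landing point. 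Here the joint use of orbifold contraction on the complement of the postcritical set and the prior holomorphic identification of $\Phi_i$ near the super-attracting cycles is essential; everything else is a direct consequence of the Closing Lemma, Theorem \ref{THZ}, and routine Thurston-equivalence bookkeeping.
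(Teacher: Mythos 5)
Your proposal is correct and follows essentially the same route as the paper: apply the Closing Lemma to produce $\tilde f,\tilde g$, realize the hyperbolic topological gluing $\tilde F$ by a rational map via Theorem~\ref{THZ}, use the fact (from \cite{ZJ}) that super-attracting cycles of $\tilde F$ are holomorphic attracting cycles to normalize $\Phi_0,\Phi_1$ on Boettcher charts, and run the lifting--contraction argument on the orbifold complement of the postcritical set to straighten $\Phi_0(R)$ into the genuine periodic rays of $G_n$ before pulling the graph back. The only stylistic difference is that you state the lift relation directly with $F$ while the paper first establishes the puzzle correspondence between $\tilde F$ and $G_n$ and then transfers it to $F$ using that $F=\tilde F$ outside the depth-$n$ critical puzzles, but you do acknowledge this agreement, so the argument is the same in substance.
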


\section{Compactness argument}

Let $\{G_n\}$  be the sequence in Proposition~\ref{rs}. We  may write
\begin{equation}\label{gg-f}
G_n(z) = z^m \frac{(z - \alpha_n^1)\cdots(z-\alpha_n^k)}{(z - \beta_n^1)\cdots(z-\beta_n^l)}
\end{equation}
with
  $m = d_2$, $l = d_2 - d_0$ and $k = d_1 - d_0$.
We need to prove  the following lemma.
\begin{lema}\label{cp}
There exist $0<r < R <\infty$ and $ \delta>0$ such that
\begin{enumerate}[label=\emph{(\arabic*)}]
    \item $r < |\alpha_n^i|, |\beta_n^j| < R$, and
    \item $|\alpha_n^i - \beta_n^j| > \delta$,
\end{enumerate}
  hold for all $n \ge 1$, $1 \le i \le k$ and $1 \le j \le l$.
\end{lema}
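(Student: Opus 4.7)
The plan is to argue by contradiction. Assume that (1) or (2) fails along a subsequence; after passing to a further subsequence, each $\alpha_n^i$ and $\beta_n^j$ converges in $\widehat{\mathbb{C}}$, with at least one limit lying in $\{0,\infty\}$ or with $\alpha_*^{i_0}=\beta_*^{j_0}$ for some indices. I will use the shared puzzle structure of Proposition~\ref{rs}, together with the normalization $\gamma=1$, to propagate bounds from the marked periodic points of $G_n$ down to the roots $\alpha_n^i,\beta_n^j$.

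The first step is to bound the specific repelling periodic points of $G_n$ arising, via the conjugation $\Phi_0^{(n)}$ of Proposition~\ref{rs}, as the landing points of the periodic rays in the initial graph $I$. Each such landing point $z_*^{(n)}\in\Gamma_n$ is a repelling periodic point of $G_n$ of some fixed period $p$. Since $\gamma=1$ gives $G_n(z)=z^{d_1}(1+o(1))$ as $z\to\infty$, the equation $G_n^p(z_*^{(n)})=z_*^{(n)}$ would force $|z_*^{(n)}|^{d_1^p-1}\to 1$ if $|z_*^{(n)}|$ escaped to $\infty$, giving a contradiction (after a preliminary reduction ensuring $|z_*^{(n)}|$ dominates the roots, or by choosing $z_*^{(n)}$ to be the landing point of minimal size). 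A parallel analysis near $0$, based on the local form $G_n(z)\approx c_nz^{d_2}$ with $c_n=(-1)^{k-l}\prod\alpha_n^i/\prod\beta_n^j$, rules out $z_*^{(n)}\to 0$ and in the process forces $c_n$ to stay bounded above and below.

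The second step passes this boundedness to the rays and equipotentials. The independence condition guarantees that $B_\infty(G_n)$ and $B_0(G_n)$ contain no critical points other than $\infty$ and $0$, so the Boettcher maps $\phi_\infty^{(n)}:B_\infty(G_n)\to\widehat{\mathbb{C}}\setminus\overline{\mathbb{D}}$ and $\phi_0^{(n)}:B_0(G_n)\to\mathbb{D}$ are global conformal isomorphisms. The normalization $\gamma=1$ pins $\phi_\infty^{(n)}$ near $\infty$, while the boundedness of $c_n$ pins $\phi_0^{(n)}$ near $0$. Koebe distortion, together with the fact that the landing points of the rays in $I$ on $\Gamma_n$ lie in a compact subset of $\mathbb{C}^*$, then yields uniform control on the equipotentials $E_\infty^{(n)},E_0^{(n)}$ and on the rays themselves. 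It follows that the depth-$1$ puzzle boundaries of $G_n$ converge, after a subsequence, to Jordan curves lying in a compact subset of $\mathbb{C}^*$.

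The third step concludes the proof. By the conjugation with $F$, each $\alpha_n^i$ lies in a specific depth-$1$ puzzle component of $G_n^{-1}(B_0(G_n))\setminus B_0(G_n)$, and the uniform boundedness of such components forces $\alpha_n^i$ into a compact subset of $\mathbb{C}^*$, yielding (1) for the $\alpha$'s; the analogous argument with preimages of $\infty$ gives (1) for the $\beta$'s. Since $\alpha_n^i$ lies in a preimage component of $B_0$ and $\beta_n^j$ in a preimage component of $B_\infty$, these puzzle components are disjoint and uniformly separated by an annulus of positive modulus, giving (2). The main obstacle is the first step: the boundedness of $z_*^{(n)}$ near $0$ and that of $c_n$ are interlocked, and breaking this circularity seems to require exploiting the presence in $I$ of several periodic orbits of distinct combinatorics.
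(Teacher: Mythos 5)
Your proposal takes a genuinely different route from the paper, but it has several gaps, the most serious of which you have already flagged yourself.

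The paper's proof of the upper and lower bounds on $|\alpha_n^i|,|\beta_n^j|$ is purely geometric and does not pass through the periodic landing points at all. It observes that among the $k+l+1$ dyadic round annuli $\{2^{-(s+1)}M_n<|z|<2^{-s}M_n\}$ (with $M_n$ the maximal modulus of a root), some annulus $A_s$ is free of zeros and poles; on its core curve $|G_n|\asymp M_n^{d_1}$, so the core curve lies in $B_\infty(G_n)$, and this is immediately contradicted by the connectedness of $\mathcal{J}_{G_n}$ together with the fact that both $0$ and $\infty$ must ``see'' the Julia set while some root lies outside $\gamma$. The lower bound is symmetric. This sidesteps entirely the circularity you worried about in your first step: there is no need to locate a region where the $z^{d_1}$ or $c_nz^{d_2}$ asymptotics are already known to dominate, because the annulus is chosen so that the asymptotics hold on it by construction. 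So the gap you identify in your first step is real, and the paper shows it is avoidable by a different argument rather than needing ``several periodic orbits of distinct combinatorics.''

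Your second and third steps have further problems. The Koebe-distortion claim that bounded landing points and a pinned B\"ottcher germ give uniform control on equipotentials and ray segments is not justified: the B\"ottcher isomorphisms $\phi_\infty^{(n)}$, $\phi_0^{(n)}$ may have arbitrarily large distortion near $\partial B_\infty(G_n)$, $\partial B_0(G_n)$, which is exactly where the rays land, and the hyperbolic geometry of $B_\infty(G_n)$ is not controlled a priori. More seriously, your treatment of assertion (2) -- that preimage components of $B_0$ and $B_\infty$ are ``uniformly separated by an annulus of positive modulus'' -- is an assertion of the conclusion, not an argument. This is in fact the hardest part of the lemma, and the paper devotes a genuinely involved argument to it: assuming $\alpha_n^{i}$ and $\beta_n^{j}$ collide, one gets a limiting map $G$ of lower degree, a small $V_n$ with $G_n(V_n)=\widehat{\mathbb{C}}$, and then a careful case analysis using the invariant gluing curve $\Gamma_n$ and geodesic rays from $0$ and $\infty$ landing at a well-chosen non-critical-value point $x\in\Gamma_n$ to derive a contradiction. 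None of this is captured by your third step. In short, the approach is interesting but would need the first step's circularity resolved and the separation argument supplied from scratch before it constitutes a proof.
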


\begin{proof}

We  first show  that $|\alpha_n^i|$ and $ |\beta_n^j|$ have a finite upper bound. Suppose not.  Let
$$
M_n = \max_{1\le i\le k , 1 \le j\le l}\{|\alpha_n^i|, |\beta_n^j|\}.
$$
Then $ \limsup M_n =  \infty$. We may assume  $M_n \gg 1$. It is clear that one of the following annuli   contains  no  $\alpha_n^i$  or $\beta_n^j$,
$$
A_s = \{2^{-(s+1)}M_n < |z| < 2^{-s}M_n\},\: 0\le s \le k+l.
$$
Let $\gamma$ be the core curve of the annulus.  For  all $z \in \gamma$, we have
$$
|z| \asymp M_n, |z - \alpha_n^i| \asymp M_n, \hbox{ and } |z - \beta_n^j| \asymp M_n.
$$
Hence  $|G_n(z)| \asymp M_n^{d_1} \gg M_n$, so  $\gamma$  lies in  the basin of infinity. Let $\mathcal{J}_{G_n}$ be the Julia set of $G_n$.  Since  $G_n$ is post-critically finite, $\mathcal{J}_{G_n}$ is connected.

We now distinguish two cases.

  $\bold{Case \:1}$.   $\mathcal{J}_{G_n}$ and infinity  lie in the same component of $\widehat{\Bbb C} - \gamma$.   This implies that  the origin can not  see any point in  $\mathcal{J}_{G_n}$ via  rays in
  the immediate basin at the origin. This is impossible.

  $\bold{Case\: 2}$. $\mathcal{J}_{G_n}$  and the origin  lie in
   the same component of $\widehat{\Bbb C} - \gamma$.   Since all $\alpha_n^i$ and  $\beta_n^j$  are contained in the bounded components of $\widehat{\Bbb C} \setminus  \mathcal{J}_{G_n}$,   it follows  that all $\alpha_n^i$ and  $\beta_n^j$  lie in  the component of   $\widehat{\Bbb C} - \gamma$ which contains the origin.  On the other hand,
    the
point $\alpha_n^i$ or $\beta_n^j$ with modulus being $M_n$, must belong to the  component of $\widehat{\Bbb C} - \gamma$ which contains  infinity.    This is again a contradiction  since $\gamma$ separates  infinity and the origin.

Thus $|\alpha_n^i|$ and  $|\beta_n^j|$ are bounded above.
Next we show a positive lower bound.
Suppose not.  Let
$$
m_n = \min_{1\le i\le k , 1 \le j\le l}\{|\alpha_n^i|, |\beta_n^j|\}.
$$ Then $\liminf m_n = 0$. We may assume that $m_n \ll 1$. Consider the annuli
$$
\{z\:|\: 2^i m_n < |z| < 2^{i+1} m_n\}, 1 \le i \le l+1.
$$
At least one of  these annuli   contains   no $\beta_n^j, 1 \le j \le l$. Let $\gamma$ be the core curve of the annulus. Then for   $z \in \gamma$,
$$
|z| \asymp m_n, |z - \beta_n^j| \succeq m_n,  |z - \alpha_n^i|  = O(1).
$$
The third inequality follows from the fact that all the $|\alpha_n^i|$   have a finite upper bound. Since $m -l = d_0$, it follows that for all $z \in \gamma$,
$$
|G_n(z)| \preceq m_n^{d_0}.
$$ Hence  the forward iteration of    $G_n$ at $z$ converges to the origin. So $\gamma$ lies in  the attracting basin of the origin. But   $\gamma$ also separates  infinity from
 the point with modulus   $m_n$ (one of $\alpha_n^i$ or $\beta_n^j$),  and thus separates infinity from $\mathcal{J}(G_n)$ (This is because $\mathcal{J}(G_n)$ is connected, all $\alpha_n^i$ and  $\beta_n^j$  lie in   components of $\widehat{\Bbb C} - \mathcal{J}(G_n)$ not containing  infinity and the origin, and $\gamma$ does not intersect $\mathcal{J}(G_n)$). This implies that $\gamma$ lies in the component of $\widehat{\Bbb C} - \mathcal{J}(G_n)$ which contains infinity, and thus lies in the immediate basin of infinity.    This contradicts with the fact that it lies in  the attracting basin of the origin. Therefore  there is a positive lower bound for
 $|\alpha_n^i|$ and  $|\beta_n^j|$.

The proof of the second assertion  proceeds by contradiction as well.  Otherwise,  after passing to
 a subsequence if necessary, there exist at least one pair of a zero and a pole  with  the same limit.  For simplicity, suppose  there is exactly one such pair, say $\alpha_n^i \to z_0$ and $\beta_n^j\to z_0$. The general case  is similar.  Then there is a rational map $G$ of lower degree
such that $G_n \to G$ uniformly   outside  any neighborhood of $z_0$.  Let $w_0 = G(z_0)$.  Then for a small Jordan neighborhood $U$ of $w_0$, there is a small Jordan neighborhood of $z_0$, say $V_n$, such that $G_n(\partial V_n) = \partial U$, and $  G_n(V_n) = \widehat{\Bbb C}$.

Since $G_n$ is   post-critically  finite, there is a gluing curve $\Gamma_n$ which is invariant for $G_n$ \cite{ZX}.  Because  $V_n$ must contain the zero
 $\alpha_n^i $ and  the pole $\beta_n^j$,   we have $\Gamma_n \cap  \partial V_n \ne \emptyset$ and hence   $\Gamma_n \cap  \partial U \ne \emptyset$.
 By the first assertion,  $|G_n(z)| \asymp |z^{d_1}|$ for $z$ near infinity and $|G_n(z)| \asymp |z^{d_2}|$ for $z$ near the origin. It follows that $\mathcal{J}_{G_n}$ and thus
  $\Gamma_n$  are bounded away from zero and  infinity.
So when $U$ is small,  with respect to the harmonic measure at
the origin, except a small subset of $\Gamma_n$,   most   points in $\Gamma_n$ can be seen by the origin through geodesic rays   inside  $\Gamma_n$   that  do not  touch $\overline{U}$.   The same is true   from the viewpoint of infinity.
 Consequently,  we can select  a point   $x\in \Gamma_n$ that  is not a critical value, so that  the geodesic rays
 from both infinity and the origin arriving at $x$ do not touch $\overline{U}$.

 Since $G_n(V_n) = \widehat{\Bbb C}$, there  exists a point $y \in V_n$ with  $G_n(y)  = x$. We now have three possibilities.

 \begin{figure}[H]
\setlength{\unitlength}{1mm}
\begin{center}
\includegraphics[width=75mm]{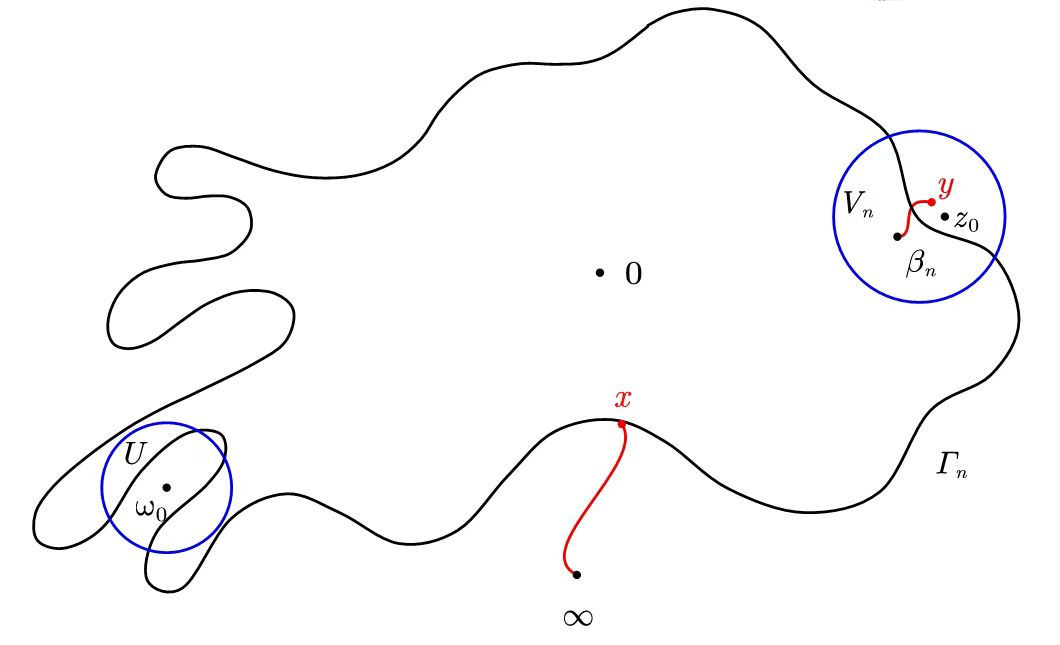}
\caption{Case 1. }
\label{Figure 1}
\end{center}
\end{figure}

\textbf{Case 1.}  The point $y$ lies  outside  $\Gamma_n$.   Let $R$ be  the geodesic ray  from infinity to the point $x$.   The
 inverse image of $R$  that contains $y$  is  a curve segment $S \subset V_n$  with $y$ as  one   end point.  The other end point of $S$  must  be a pole   inside  $\Gamma_n$.
 This implies that  an interior point of $S$  lies in $\Gamma_n$.
  Since $\Gamma_n$ is invariant, it follows that an interior point of $R$  lies in  $\Gamma_n$,  which is a contradiction. See Figure 12 for an illustration.

 \begin{figure}[H]
\setlength{\unitlength}{1mm}
\begin{center}

\includegraphics[width=75mm]{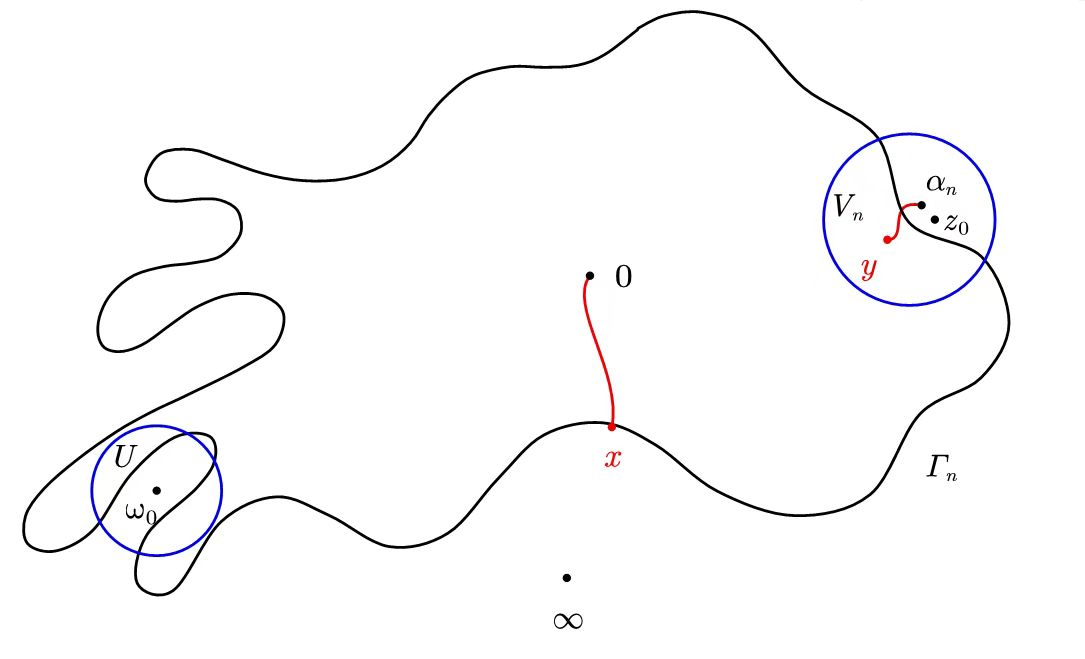}
\caption{Case 2. }
\label{Figure 1}

\end{center}
\end{figure}

\textbf{Case 2.}  The point $y$ lie   inside  $\Gamma_n$.  Considering the geodesic ray $R$  from the origin  to the point $x$.  A contradiction can be obtained  by a similar argument. See Figure 13 for an illustration.

 \begin{figure}[H]
\setlength{\unitlength}{1mm}
\begin{center}

\includegraphics[width=75mm]{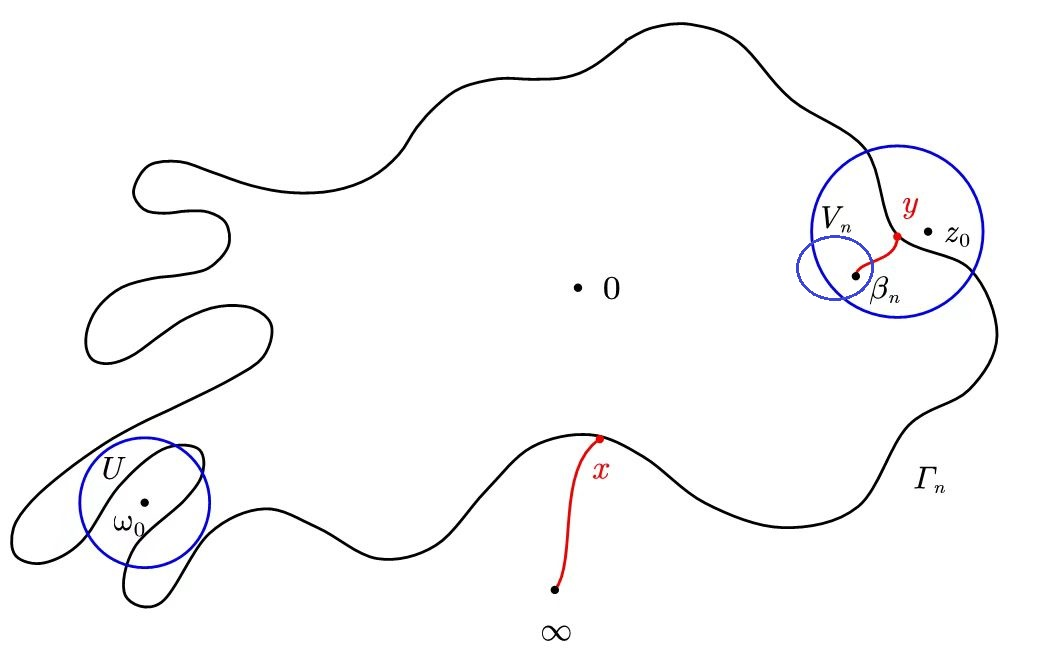}
\caption{Case 3. }
\label{Figure 1}

\end{center}
\end{figure}
\textbf{Case 3.}  The point $y$ lies on   $\Gamma_n$.  Take  the geodesic ray $R$  from  infinity to   $x$.   The
 inverse image of $R$  that contains  $y$  is  a curve segment $S \subset V_n$  with end points $y$ and a pole. Since $x$ is not a critical value, $y$ is not a critical point. So $S$ must pass  through some pre-image circle of $\Gamma_n$.
 This again implies that  an interior point of  $R$ lies on  $\Gamma_n$,  which  is a contradiction. See Figure 14 for an illustration.

 This proves a positive lower bound for the distance between $\alpha_n^i$ and $\beta_n^j$ and completes the proof.
  \end{proof}

By Lemma~\ref{cp} and by taking a subsequence if necessary, we may assume that $G_n$  converges uniformly  to some rational map $G \in \mathcal{R}_{d_1 + d_2 - d_0}$ with respect to the spherical metric.

Recall that  the   initial graph $I$ taken in (\ref{RE})  consists of periodic
rays and equipotential curves for $F$.  Since the rays are periodic, the
angles  for the rays must be rational.  By Proposition~\ref{rs},   for each ray in $I$, the corresponding ray for $G_n$  lands  at a repelling periodic point with the same external or internal angle.  But a priori, the ray with the same angle for the limit map $G$ may land at a parabolic periodic point.  To overcome this problem,  noting  that each parabolic cycle count at least one critical point,
we may take the set of periodic rays in $I$ to be large enough so that  a sub-graph of $I$ can be used to construct   puzzles, and moreover,  for this sub-graph, all the corresponding rays for $G$ land at repelling periodic points.  In particular, for any two rays for $F$ landing  at the same point, the two
corresponding rays for $G$ must   land at  the same point. Because otherwise, since $G_n$ is a perturbation of $G$ for all $n$ sufficiently large, the two rays for $G_n$ would also land at  two different points. This is impossible by  Proposition~\ref{rs}.
  Since $G_n$ converges uniformly to $G$,   by  Proposition~\ref{rs}, we have
  \begin{lema}\label{FGS}    $F$ and $G$ have the same puzzle structure in the following sense.
 \begin{itemize}  \item There exist an open neighborhood of each super-attracting cycle of $F$ which contains all the equi-potentials in $I$,   and a homeomorphism $\Phi_0$ of the sphere which is holomorphic and conjugates $F$ to $G$ on  these neighborhoods, \item $\Phi_0$ maps each ray in $I$ onto the corresponding ray of $G$ with the same external or internal angle, and moreover, $\Phi_0$ conjugates $F$ to $G$ on these rays, \item   $\Phi_0$  can be lift through $\Phi_i \circ F = G \circ \Phi_{i+1}$   so that $\Phi_i = \Phi_{i+1}$ on $F^{-i}(I)$ for all $i \ge 0$.
\end{itemize}
\end{lema}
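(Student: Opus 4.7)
The plan is to obtain $\Phi_0$ and its successive lifts $\Phi_i$ as limits of the homeomorphisms produced in Proposition~\ref{rs} for the sequence $G_n$. By Lemma~\ref{cp} and by passing to a subsequence, $G_n \to G$ uniformly in the spherical metric, with the zeros and poles of $G_n$ converging to distinct, finite, non-zero limits. In particular, the local degree of $G_n$ at each super-attracting fixed point matches that of $G$, so the critical portrait at these cycles is stable under the limit.

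To construct $\Phi_0$, I use that near each super-attracting cycle of $G_n$ the B\"ottcher coordinates can be normalized to converge uniformly, on a fixed open neighborhood, to B\"ottcher coordinates of $G$. Composing with the B\"ottcher coordinates of $F$ yields a holomorphic conjugacy between $F$ and $G$ on open neighborhoods of the super-attracting cycles of $F$ large enough to contain every equipotential of $I$. For the rays in $I$, the paragraph preceding the lemma has already arranged that the corresponding ray for $G$ lands at a repelling periodic point with the same angle. Since repelling periodic orbits move holomorphically under perturbation and the landing of a rational-angle ray at a repelling periodic point is stable, the corresponding rays for $G_n$ converge to those for $G$ uniformly on compact subarcs. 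Thus $\Phi_0$ extends continuously to send $I$ homeomorphically onto the analogous graph for $G$ while conjugating the dynamics there. Extending $\Phi_0$ to a global homeomorphism of the sphere is then standard, since $F$ and $G$ already share the combinatorial structure on $I$ and on the super-attracting neighborhoods.

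To produce the lifts, I invoke the standard lifting procedure for branched covers of the sphere: given $\Phi_i$, the equation $\Phi_i\circ F = G\circ \Phi_{i+1}$ admits a unique continuous lift once its value at a single non-critical base point is fixed. The required normalization $\Phi_i = \Phi_{i+1}$ on $F^{-i}(I)$ pins down this lift consistently: if one sets $\Phi_{i+1}(x) := \Phi_i(x)$ for $x \in F^{-i}(I)$, then $F(x)\in F^{-(i-1)}(I)$ and the induction hypothesis $\Phi_{i-1} = \Phi_i$ on $F^{-(i-1)}(I)$ gives $G(\Phi_{i+1}(x)) = G(\Phi_i(x)) = \Phi_{i-1}(F(x)) = \Phi_i(F(x))$, so the extension is compatible with the covering relation and extends uniquely to the rest of $S^2$. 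Equivalently, $\Phi_{i+1}$ is the limit (along the convergent subsequence) of the homeomorphisms produced by Proposition~\ref{rs} for $G_n$: these are already compatible with the corresponding $\Phi_i$ of $G_n$ on $F^{-i}(I)$, and convergence on $F^{-i}(I)$ together with $G_n \to G$ and the unique-lift property forces the whole sequence to converge to a homeomorphism $\Phi_{i+1}$.

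The main obstacle is ensuring that the lifting procedure does not degenerate as $i \to \infty$, i.e.\ that every ray of $I$ continues to land at a repelling periodic point of the limit $G$ rather than at a parabolic one. This is precisely what the paragraph immediately preceding the lemma arranges, by enlarging $I$ enough and then passing to a sub-graph so that all rays under consideration land at repelling periodic points of $G$; since repelling periodic landing is stable under perturbation, the combinatorial identification survives the limit. A secondary technical point is to verify that $\Phi_i^{(n)}\to \Phi_i$ on $F^{-i}(I)$ for each fixed $i$, which follows from the holomorphic motion of repelling periodic orbits under the perturbation $G_n\to G$ together with the uniform convergence of $G_n$ on compact subsets of the Fatou set. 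Together these give the three bullet conditions of the lemma.
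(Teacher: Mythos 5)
Your proposal takes essentially the same route as the paper: the paper deduces the lemma directly from the uniform convergence $G_n\to G$ together with Proposition~\ref{rs}, with the sole non‐trivial point (that limit rays might land at parabolic cycles) handled exactly as you note, by enlarging $I$ and passing to a sub-graph. You have simply made explicit the convergence of B\"ottcher coordinates, the stability of repelling landing, and the limiting procedure for the lifts $\Phi_i^{(n)}\to\Phi_i$ on $F^{-i}(I)$, which the paper leaves implicit.

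One small caution: in your first formulation of the lifting step you assert that $\Phi_i\circ F=G\circ\Phi_{i+1}$ ``admits a unique continuous lift once its value at a single non-critical base point is fixed.'' Uniqueness is correct, but \emph{existence} of a lift through a branched cover is not automatic from a choice of base point; it requires that $\Phi_i$ respect the branching data (critical values and local degrees) of $F$ and $G$. In the present setting this is not an issue because, for each $n$, the lifts already exist by Proposition~\ref{rs}, and one passes to the limit — your second, ``limit of the $\Phi_{i+1}^{(n)}$'' formulation is the one that actually carries the argument. You may also want to record explicitly, as the paper does, that rays in $I$ that co-land for $F$ produce rays that co-land for $G$ (argued by perturbing back to $G_n$), since this is what guarantees that $\Phi_0$ maps the graph $I$ homeomorphically onto a connected graph for $G$ rather than to a disjoint union of arcs.
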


\section{Non-renormalizable holomorphic dynamics}

\subsection{Uniform control of the puzzle size}
   Let $\mathcal{P}$ be the puzzle system of  $G$.   Since $F$ is non-normalizable,  it follows that $G$ is non-renormalizable.
  \begin{thm}\label{us}
For any $\epsilon > 0$, there is an $M \ge 1$
such that for all $n \ge M$ and  every $P \in \mathcal{P}$ of depth $n$,  we have
$$
{\rm diam}(P) < \epsilon.
$$
\end{thm}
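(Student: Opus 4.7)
The plan is to reduce the uniform statement to two classical ingredients from non-renormalizable holomorphic dynamics: shrinking of critical puzzle nests, and Koebe distortion on univalent pullbacks. By Lemma~\ref{FGS}, the rational map $G$ inherits the puzzle combinatorics of $F$, so each critical point of $G$ in $\mathcal{J}_G$ is non-renormalizable in the sense of $\S 3.1$. The shrinking lemma for non-renormalizable maps without indifferent cycles (Kahn--Lyubich~\cite{KL1,KL2}, Kozlovski--Shen--van Strien~\cite{KSS,KS}, Qiu--Yin~\cite{QY}) then yields $\diam(P_n(c)) \to 0$ for each critical $c \in \mathcal{J}_G$. Finiteness of the critical set upgrades this to a uniform statement: for each $\delta > 0$, there is $N_1 = N_1(\delta)$ such that $\diam(P_n(c)) < \delta$ for all $n \ge N_1$ and all critical $c \in \mathcal{J}_G$.

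To propagate this to arbitrary puzzles, I would augment the initial graph $I$ by an additional pair of external and internal equipotentials so that every depth-$0$ puzzle sits compactly in an auxiliary piece, providing an annular collar of definite modulus $\mu > 0$ disjoint from the critical values of $G$. For a general puzzle $P = P_n(z)$, define $t = t(P) \in \{0, 1, \ldots, n\}$ to be the smallest index at which $G^t(P)$ contains a critical point, setting $t = n$ if none exists. Then $G^t \colon P \to G^t(P)$ is a conformal isomorphism, and pulling back the collar of $G^t(P)$ by the univalent inverse branch of $G^{-t}$ and applying Koebe distortion gives
\begin{equation*}
\diam(P) \le C(\mu) \cdot \diam(G^t(P)).
\end{equation*}
If $t < n$ and $n - t \ge N_1$, then $G^t(P)$ is a critical puzzle of depth $\ge N_1$, so $\diam(P) < C(\mu)\delta$ by the first step. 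If $t = n$ or $n - t < N_1$, one iterates the construction: beyond the bounded critical segment, the orbit returns to the non-critical regime, where a further univalent pullback across a long block contracts by a Koebe factor that tends to $0$ as $n \to \infty$ because $G$ is eventually expanding on any compact subset of $\mathcal{J}_G$ disjoint from the critical set.

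The principal difficulty is ensuring uniformity of the Koebe constant across all pullbacks: a priori the annular collar might lose modulus whenever its backwards image meets a critical value of $G$. I would handle this by stratifying the backwards iteration according to visits to critical puzzles and bounding the modulus loss at each visit in terms of the depth of the critical puzzle crossed. Using the first step together with the finiteness of the combinatorial types of shallow critical visits (those of depth $< N_1$), one concludes that the cumulative loss of modulus is controlled by a universal constant depending only on $\delta$ and on the puzzle combinatorics, so a definite Koebe space survives throughout the iteration. Choosing $\delta$ small in terms of $\epsilon$ and $\mu$, this produces the required $M = M(\epsilon)$ and establishes the uniform diameter bound.
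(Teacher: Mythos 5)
Your proposal takes a genuinely different route from the paper, and it contains a gap that does not look repairable within the framework you set up. The paper argues by contradiction from a fixed $z$ with $\diam(P_n(z)) > \epsilon$, runs a case analysis on the recurrence type of the critical points accumulated by $\omega(z)$ (persistently recurrent via the enhanced-nest pairs $K_n \Subset K_n'$ with $\mathrm{mod}(K_n'\setminus K_n)\ge\mu$; reluctantly recurrent via bounded-degree first returns; non-critical $\omega$-limit via univalent pullbacks), and in each case accumulates infinitely many pairwise disjoint essential annuli of definite modulus around $z$, finishing with Gr\"{o}tzsch. You instead try to propagate shrinking from critical nests to arbitrary puzzles by first-entry univalent pullbacks and Koebe.

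The crux of the problem is the displayed inequality $\diam(P)\le C(\mu)\,\diam(G^t(P))$. Koebe distortion on a univalent inverse branch of $G^t$ with a definite annular collar controls the \emph{shape} of $P$ (bounded distortion), not its absolute size. What Koebe actually yields is $\diam(P)\asymp_{C(\mu)} |(G^t)'(p_0)|^{-1}\diam(G^t(P))$ for some $p_0\in P$, and the derivative factor $|(G^t)'(p_0)|^{-1}$ has no a priori upper bound. In the non-renormalizable setting that is precisely the obstruction: there is no uniform expansion to control this factor. Your fallback --- that $G$ is ``eventually expanding on any compact subset of $\mathcal{J}_G$ disjoint from the critical set'' --- holds for hyperbolic or sub-hyperbolic maps but is simply false for the non-renormalizable maps considered here; if it were true there would be no need for the puzzle machinery at all. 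Because the claimed contraction is the load-bearing step, the propagation from critical to arbitrary puzzles collapses. The fix is to work with moduli rather than diameters: pull back a sequence of definite-modulus annuli (from the enhanced nest or from bounded-degree returns) to obtain disjoint nested annuli surrounding $z$, then invoke Gr\"{o}tzsch; this requires the $\omega$-limit case analysis you omitted. Your idea of adding an extra equipotential collar to ensure depth-$0$ puzzles have a definite-modulus frame is sound, and the citations for critical-nest shrinking are the right ones, but they do not rescue the diameter inequality.
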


Theorem~\ref{us} is  a consequence of     ideas in non-renormalizable holomorphic dynamics, which have been well developed
since the fundamental work in \cite{KL1}\cite{KL2}\cite{KSS}.  The reader may refer to
 \cite{KS}\cite{QY}\cite{RY}\cite{RYZ}\cite{YZ} for   subsequent work on this aspect. Since Theorem~\ref{us} is the key in the next step of our proof, we would like to outline the proof as follows. Before that, let us introduce some notions first.

  We say a critical point $c$ of $G$ is $\emph{recurrent}$ if for any $n \ge 1$, there is a $k \ge 1$ such that $G^{k}(c) \in P_n(c)$.  Otherwise, we say it is $\emph{non-recurrent}$.  We say   $c$  is $\emph{reluctantly recurrent}$  if there exist   $k_n  \to \infty$, an integer $D>0$ and a depth-$0$ puzzle $P$
  such that
$$
G^{k_n}: P_{k_n}(c) \to P
$$
has degree less than $D$.  Otherwise, we say it is $\emph{persistently recurrent}$. We say $c$ is $\emph{periodically persistently recurrent}$ if it is renormalizable.

For $x \ne y$, we say $y \in \omega(x)$ if for any $n \ge 1$, there is some $m \ge 1$ such that $G^m(x) \in P_n(y)$. We say  $x$ is equivalent to $y$, denoted as $x \sim y$,  if $x \in \omega(y)$ and $y \in \omega(x)$.

\begin{proof}
The proof is by contradiction. Suppose it were not true. Then we have a sequence of nested puzzles $P_n$  such that  \begin{equation}\label {ac}
{\rm diam}(P_n) > \epsilon.\end{equation}   Since $\overline{P_n}$ is a sequence of nested compact set, $\bigcap \overline{P_n} \ne \emptyset$.  Take $z \in  \bigcap \overline{P_n}$.
  We claim $z \in \bigcap  {P_n}$. Since otherwise, there exists an integer $k$ such that $\zeta = G^k(z)$ is the landing point of a periodic ray in the initial graph $I$. Then $P_{n-k}$ is a sequence of nested puzzles with $\zeta$ being a common boundary point. Since $G$ is non-renormalizable and $\zeta$ is a repelling periodic point, it follows that ${\rm diam}(P_{n-k}) \to 0$ and thus ${\rm diam}(P_{n}) \to 0$. This is a contradiction. The claim has been proved.  So $P_n = P_n(z)$ and by  (\ref{ac}) we have
  $$
  {\rm diam}(P_n(z)) > \epsilon.
  $$

For a non-periodically
persistently recurrent critical point $c$,  if there exists,  according to \cite{KL2}\cite{KSS},
    there is a sequence of pais of puzzles containing $c$, say $(K_n, K_n')$ and $\mu > 0$, such that
\begin{itemize}
\item $\rm{mod}(K_n'\setminus K_n) > \mu$,
\item $K_n'\setminus K_n$ does not intersect the forward orbits of the critical points in the equivalent class of $c$.
\end{itemize}

For a  reluctantly recurrent critical point $c$,  if there exists,  there exist a $D > 0$,
 a sequence $k_n \to \infty$ and a depth-$0$ puzzle $P$    such that the degree of
 $$
 G^{k_n}: P_{k_n}(c) \to P
 $$ is bounded by $D$.   By the assumption  that  the rays in $I$ does not land on the forward orbit of $c$,
   we can always find a puzzle $Q$ containing $G^{k_n}(c)$ and  compactly contained
   in $P$.   We  can then pull back the annulus $P \setminus \overline{Q}$
  by $G^{n_k}$ to get a sequence of disjoint   annuli  around $c$.

The argument is now divided into two cases.

$\bold{Case \: I.}$  There exists some $c$ such that $c \in \omega(z)$.

If $c$ is non-periodically
persistently recurrent,   we have a sequence $k_n \to \infty$ so that $G^{k_n}(z)$ first enters $K_n$.
If there are infinitely many $K_n' \setminus K_n$ which  intersect a critical orbit of some $c'$ which is not equivalent to $c$, by  the 3th assertion
 of Lemma 5.9 in \cite{RY}, $z$ has the ``bounded degree property''.  We can thus use the same argument as in  the case of reluctantly recurrent critical point to get a sequence of disjoint annuli around $z$. Otherwise, there are infinitely many $K_n' \setminus K_n$ which  do not intersect any critical orbit.
Then the pull back of the annuli $K_n'\setminus K_n$ by $G^{-k_n}$ produces a sequence of annuli  $Q_n'\setminus Q_n$  around $z$
with ${\rm mod}(Q_n'\setminus Q_n) \ge \mu/C$ for some uniform $C >1$. A contradiction can be derived  from the
Gr\"{o}tzsch inequality.

If $c$ is reluctantly  recurrent,  as we said before we have a sequence of nested puzzle neighborhoods $P_{k_n}(c)$  and  a puzzle $P$ such that the degree of
$
G^{k_n}: P_{k_n}(c) \to  P
$ is bounded by some uniform $D> 0$. Let $l_n \ge 0$ be the least  integer  such that
$$
G^{l_n}(z) \in P_{k_n}(c).
$$
Then  the degree of

$$
G^{k_n +l_n}: P_{l_n+k_n}(z) \to  P
$$ is  uniformly bounded. By the assumption  that  the rays in $I$ does not land on the forward orbit of $z$,
   we can always find a puzzle $Q$ containing $G^{k_n+ l_n}(z)$ and  compactly contained
   in $P$.  We can then get a sequence of disjoint annuli around $z$  by pulling back the annulus $P \setminus \overline{Q}$.  A contradiction is again derived   from the
Gr\"{o}tzsch inequality.

$\bold{Case \: II.}$  $\omega(z)$ contains  none critical points.  Then there is an $n_0$ such that for any $n > n_0$, the map
$$
G^{n-n_0}: P_n(z) \to P_{n_0}(G^{n-n_0}(z))
$$
is a holomorphic isomorphism.   The    same argument as above can be used to get a contradiction.

\end{proof}

\subsection{Rigidity for $G$} The quasiconformal rigidity for
 non-renormalizble holomorphic rational maps  has been known    since the work of \cite{KL2}\cite{KSS}. For completeness, let us sketch the outline of the proof as follows.

Suppose there are two rational maps $G$ and $G'$  both of which realize $F$.  For $k \ge 0$, let $\mathcal{P}_k$ and  $\mathcal{P}_k'$ denote  the union  of
the collection of puzzles of level $k$ for $G$ and $G'$ respectively.     Then we may define $h$ which maps the outside of
$\mathcal{P}_0$ conformally onto the outside of $\mathcal{P}_0'$
so that
$$
h \circ G = G' \circ h.
$$
Since $G$ is combinatorially equivalent to $G'$,  by lifting the above equation  we can   conformally extend $h$  to  a  holomorphic isomorphism between the Fatou set of $G$ and $G'$.  Since points in the Julia sets of $G$ and $G'$ can be represented as nested puzzle sequence by Theorem~\ref{us}, and since $h$ gives a one-to-one correspondence between puzzles of $G$ and $G'$, we can extend $h$ to a homeomorphism of the sphere to itself.    Next we need to show that such $h$ must be   quasi-conformal.

\begin{lema}
[QC-CRITERION, \cite{KSS}] Let $H\geq1,\ M>1$ and $m>0$ be constants, and $h: U\to \tilde{U}$ be an orientation-preserving homeomorphism between two Jordan domains. Let $X_0$ be a subset of $U$ with $\text{mes}(X_0)=\text{mes}(h(X_0))=0$ and $X_1$ be a subset of $U\setminus X_0$. Let
$$
\underline{H} (h, x) = \liminf_{r \to 0} \frac{{\rm sup}_{|y-x|=r}|h(y)-h(x)|}{{\rm inf}_{|y-x|=r}|h(y)-h(x)|}\in[1,\infty].
$$ Assume that the following hold
\begin{enumerate}[label=\emph{(\arabic*)}]
\item for any $x\in U\backslash (X_0\cup X_1),\ \underline{H}(h, x)< H;$
\item for any $x\in X_0,\ \underline{H} (h, x)<\infty;$
\item there exists a family $\mathcal{V}$ of pairwise disjoint topological disks which form a covering of $X_1$, such that for each $P\in \mathcal{V}$, we have
\begin{enumerate}[label=\emph{(\alph*)}]
\item both $P$ and $h(P)$ have M-Shape;
\item $\text{mod}(U\backslash \overline{P})>m$ and $\text{mod}(\tilde{U}\backslash \overline{h(P)})>m.$
\end{enumerate}
\end{enumerate}
Then there exists a K-quasiconformal mapping $\tilde{h}:U \to \tilde{U}$ such that $$h \vert_{\partial U}=\tilde{h} \vert_{\partial U},$$ where $K > 1$ is a constant depending only on $H,\ M \ \text{and}\ m$.
\end{lema}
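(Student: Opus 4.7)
The plan is to modify $h$ on the disks in $\mathcal{V}$ so that the modified map is explicitly quasiconformal there, and then invoke a measure-theoretic quasiconformality criterion (in the spirit of Heinonen--Koskela) to promote the pointwise bound on $\underline{H}$ into global quasiconformality. The output will be a map $\tilde h$ which coincides with $h$ on $\partial U$ and is $K$-QC on $U$, with $K = K(H, M, m)$.

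First I would deal with the covering family $\mathcal{V}$. For each $P \in \mathcal{V}$, condition (3) says that $P$ and $h(P)$ both have $M$-bounded shape and are compactly contained in $U$ and $\tilde U$ with collar of modulus at least $m$. Since $P$ has $M$-shape, it is sandwiched between two concentric round disks of comparable radii (up to a factor depending on $M$), and the same holds for $h(P)$. Combined with the bounded modulus of the collar, this forces the conformal geometry of $(U, P)$ and $(\tilde U, h(P))$ to be uniformly controlled. Consequently, the boundary correspondence $h|_{\partial P}\colon \partial P \to \partial h(P)$ can be replaced by a $K_0$-quasiconformal homeomorphism $P \to h(P)$ with $K_0 = K_0(M, m)$. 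Redefining $h$ to equal this $K_0$-QC extension inside each $P$ and leaving it unchanged outside $\bigcup_{P \in \mathcal{V}} P$ yields a new homeomorphism $\tilde h \colon U \to \tilde U$ with $\tilde h|_{\partial U} = h|_{\partial U}$. Since the disks in $\mathcal{V}$ are pairwise disjoint, this modification is unambiguous.

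Next I would verify the hypotheses of the Heinonen--Koskela type QC-criterion for $\tilde h$. On the open set $U \setminus (X_0 \cup X_1)$, we have $\tilde h = h$ and condition (1) gives $\underline{H}(\tilde h, x) < H$. On the interior of each $P \in \mathcal{V}$, $\tilde h$ is $K_0$-QC, so $\underline{H}(\tilde h, x) \le K_0$ almost everywhere. Since $\mathcal{V}$ covers $X_1$, this yields the uniform bound $\underline{H}(\tilde h, x) \le \max(H, K_0)$ for almost every $x \in U \setminus X_0$. Now Heinonen--Koskela's planar theorem (a homeomorphism of planar domains with $\underline{H}$ finite everywhere and uniformly bounded a.e.\ is quasiconformal with dilatation controlled by that bound) applies to the restriction of $\tilde h$ to $U \setminus X_0$. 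The hypothesis $\text{mes}(X_0) = \text{mes}(h(X_0)) = 0$, together with finiteness of $\underline{H}(\tilde h, \cdot)$ on $X_0$ (condition (2)), makes $X_0$ a removable set for quasiconformality: the Beltrami coefficient of $\tilde h$ on $U \setminus X_0$ extends by zero (or arbitrarily) to a bounded measurable coefficient on $U$, and the unique homeomorphic solution of the corresponding Beltrami equation with the same boundary values must coincide with $\tilde h$ by the ACL characterization of QC maps across a doubly null set.

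The main obstacle I expect is the careful handling of the measure-zero set $X_0$: one needs a removability statement strong enough that pointwise finiteness of $\underline{H}$, rather than a uniform bound, suffices on $X_0$, and this is where both conditions (2) and the doubly null hypothesis $\text{mes}(X_0) = \text{mes}(h(X_0)) = 0$ are used in a combined way (finiteness prevents any local degeneration, while measure zero ensures the set does not contribute to any integral). A secondary technical point is verifying that the QC extension on each $P$ can be made with dilatation depending only on $M$ and $m$; this is essentially the content of the Koebe-type distortion estimates for topological disks of $M$-bounded shape surrounded by a collar of modulus at least $m$, and it is precisely why conditions (3a) and (3b) are imposed in the statement.
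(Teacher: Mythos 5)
The paper does not give a proof of this lemma at all: it is stated verbatim as a cited black box from Kozlovski--Shen--van Strien \cite{KSS}, and the authors immediately proceed to apply it. There is therefore no in-paper proof to compare your attempt against, and I can only assess your sketch on its own merits against the actual argument in \cite{KSS}.

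Your high-level plan---replace $h$ inside the covering disks $P\in\mathcal V$ by a map with explicitly bounded dilatation, then treat $X_0$ by a removability/measure-zero argument and invoke a Heinonen--Koskela-type pointwise criterion based on $\underline H$---is consistent in spirit with how results of this type are proved, and the appeal to $\liminf$-dilatation criteria (rather than the classical $\limsup$ version) is the right instinct. However, there is a genuine gap at the central step. You claim that because $P$ and $h(P)$ are $M$-shaped and have collars of modulus $\geq m$, the boundary correspondence $h|_{\partial P}:\partial P\to\partial h(P)$ extends to a $K_0(M,m)$-quasiconformal homeomorphism $P\to h(P)$. That does not follow from conditions (3a)--(3b) alone. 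Those conditions control the \emph{geometry} of the two disks (roundness and separation from the ambient boundary), but say nothing about the regularity of the boundary \emph{homeomorphism} $h|_{\partial P}$; an arbitrary boundary homeomorphism between two round disks admits no extension of uniformly bounded dilatation. One must instead use condition (1) on a neighborhood of $\partial P$ to get quantitative control on the boundary correspondence (or abandon the requirement that the modification agree with $h$ on $\partial P$ and construct $\tilde h$ from scratch, which then raises a continuity issue you do not address). The argument in \cite{KSS} is considerably more delicate on precisely this point and also on the removability of $X_0$, where your treatment (pointwise finiteness of $\underline H$ plus double measure-zero implies removability for QC) is stated in one line but is itself a nontrivial fact requiring the ACL and null-set absorption properties of the putative map $\tilde h$. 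As written, your sketch would not compile into a complete proof; the extension-on-$P$ step needs to be either supplied with an argument that exploits the dilatation bound of $h$ near $\partial P$, or replaced by the actual construction from \cite{KSS}.
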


The reader may refer to   \cite{KSS}\cite{RYZ}\cite{Shen}\cite{YZ} for the details of the remaining argument. Let us just sketch the basic idea as follows. For $k \ge 0$,  $h$ maps   each puzzle of $G$ with depth $k$ to the corresponding puzzle of $G'$. Note that our puzzles are not Jordan domains. But this does not matter since
$h$ can be lifted  to a homeomorphism between two closed unit disks  through Riemann   holomorphic isomorphisms.
Apply the lemma to each such puzzle.
  The union of $X_0$ and $X_1$ is the Julia set of $G$ contained in the puzzle.  Since $h$ is conformal in the Fatou set, the condition (1) is satisfied. The set $X_0$ consists of good points in the Julia set,
which satisfy the bounded degree condition and therefore, the distortion is bounded
in a sequence of puzzle neighborhoods shrinking to the points. It follows that $X_0$ is a null set.  Thus the condition (2) is satisfied.  The set $X_1$ consists of the remaining points in the Julia set. The required annulus condition (b) of (3) is guaranteed by  enhanced nested puzzle sequence \cite{KSS}, which implies the bounded distortion of the pull back of the puzzles in the enhanced nest.  This, together with the power law implies the bounded shape condition (a) of (3).  We can thus  extend $h$ to all puzzles of depth $k$ and
get a $K$-quasiconformal homeomorphism $h_k$ of the sphere. Let $k \to \infty$, we get a $K$-quasiconformal conjugation of $G$ and $G'$.

Let us show $h$ is actually conformal.  Suppose it  were  not.   Then  there would be an invariant line field   supported on a subset of $X_1$ with positive Lebesgure measure. This implies that some persistently recurrent critical point of $G$ must be a Lebesgue point of the line filed.  We then get  a contradiction by the recurrence of the critical point  since  the pull back to the critical point produces diverse directions of the lines  \cite{Shen}.

\section{The Proof of the Main Theorem}

\subsection{The realization of the geometric gluing}

By the rigidity assertion in  $\S 6.2$,  the sequence in Proposition~\ref{rs} has a unique limit  $G$.
In this section  we will
 show that $G$ realizes the simultaneous uniformization of $f$ and $g$ and is thus the geometric gluing of $f$ and $g$.  By definition, it suffices to show that there are two  holomorphic   conjugations
  $$\phi:  {B_\infty(f)} \to   {B_\infty(G)},     \:\:\: \psi:   {B_\infty(g)} \to  {B_0(G)}$$   which can be respectively  extended to  conjugations   between the boundaries, and that the two polynomial Julia sets are glued together along a Jordan curve.
We do this only for $\phi$, and the same argument applies to    $\psi$  as well.
$$
 \begin{CD}
         B_\infty(f)          @  > \phi      >  >    {B_\infty(G)}        \\
           @V f  VV                         @VV  G  V\\
          B_\infty(f)           @  >\phi   >  >        {B_\infty(G)}
     \end{CD}
     $$

Let $\Phi_0$ be the homeomorphism guaranteed by Lemma~\ref{FGS}.  Recall that $\Phi_0$ is holomorphic and conjugates $F$ to $G$
in an open neighborhood of  super-attracting cycles of $F$.  According to Lemma~\ref{FGS} we can lift $\Phi_0$ through the following diagram for all $i \ge 0$,
$$
 \begin{CD}
         S^2         @  > \Phi_{i+1}    >  >    \widehat{\Bbb C}        \\
           @V F  VV                         @VV  G  V\\
         S^2          @  >\Phi_i   >  >        \widehat{\Bbb C}
     \end{CD}
 $$
Since $F = f$ on $B_\infty(f)$, this yields a holomorphic isomorphism from $B_\infty(f)$ to $B_\infty(G)$ which conjugates
  $f$ and $G$. Let us denote it as $\phi$.

   Recall that  the initial graph of $F$ induces an initial graph of $f$ (see $\S 3$, immediately after Lemma~\ref{pp}), and thus induces a puzzle system for $f$.
  We  need to note that, for any $n \ge 1$, the depth-$n$
     puzzle neighborhoods of points in $\mathcal{J}_f$ for $f$ and $F$
      are different,  but the part of the boundary of each of the two puzzles, which can be seen from infinity, is the same.
       \begin{figure}[H]
\setlength{\unitlength}{1mm}
\begin{center}

\includegraphics[width=120mm]{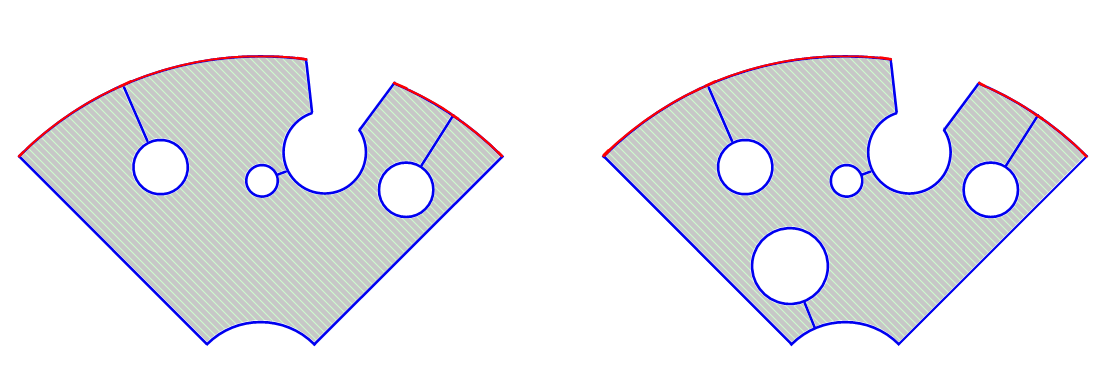}
\caption{ The left hand is the puzzle for $f$ and the right hand is the puzzle for $F$. The red curve segment
 of the boundary is the part which can be seen from infinity through the rays}
\label{Figure 1}

\end{center}
\end{figure}

      Since $F$ and $G$ have the same puzzle structure,
     the map $\phi$ provides a correspondence between
      puzzles $P$ for $f$ to   puzzles $Q$ for $G$ $-$ $P$ can be seen by  a ray $R$
      in $B_\infty(f)$  if and only if $Q$ can be seen by the ray $\phi(R)$ in $B_\infty(G)$.  Let us denote this correspondence by $Q = \phi(P)$.

       Since $f$ is non-renormalizable, the nested  puzzles  for  any point $z \in \mathcal{J}_f$
      shrinks  to $z$.   So we may regard a point  in the Julia set as a nested sequence of  puzzles.  This makes sense even when the point is a common boundary point of a nested sequence of puzzles.  By  Theorem~\ref{us},   the
       nested sequence of the corresponding
         puzzles for $G$  shrink to a point $ w \in  \partial B_{\infty}(G)$. By setting  $\phi(z) = w$ we extend $\phi$ to a homeomorphism
             $\phi: \mathcal{J}_f \to \partial B_{\infty}(G)$  which conjugates the dynamics of $f$ and $G$.  The same idea can be used to construct $\psi$.

    Since $\phi: \mathcal{J}_f \to \partial B_{\infty}(G)$ is a homeomorphism and since $\Gamma_f$ is a Jordan curve,
     is follows  that $\phi(\Gamma_f) \subset  \partial B_{\infty}(G)$ is a Jordan curve.  By construction, every puzzle neighborhood of points in $\phi(\Gamma_f)$ intersects $B_0(G)$. So
       the points in  $\phi(\Gamma_f)$
       can be seen from the origin through the rays in $B_0(G)$, that is,  $\phi(\Gamma_f) \subset  \partial B_{0}(G)$.  All these implies
       $$
       \phi(\Gamma_f) \subset  \partial B_{\infty}(G) \cap  \partial B_{0}(G).
       $$ To see the above relation must be an equation, we first note that a point belongs to $\partial B_{\infty}(G) \cap  \partial B_{0}(G)$ if and only if it belongs to a nested sequence of puzzles all of which can be seen from both infinity and the origin.  By construction, a
        puzzle for $F$ can be seen from both  infinity and the origin if and only if the puzzle  intersects the gluing curve for $F$.  Since a puzzle for $F$ intersects the gluing curve if and only if the corresponding puzzle for $f$ intersects $\Gamma_f$, and
        Since $G$ and $F$ have the same puzzle structure, a puzzle $Q$
        for $G$ can be seen from both  infinity and the origin if and only if $Q = \phi(P)$ where $P$ is a puzzle for $f$  intersecting $\Gamma_f$.       So we must have
      $$
      \phi(\Gamma_f)  =   \partial B_{\infty}(G) \cap  \partial B_{0}(G).
      $$
      In the same way we obtain
      $$
      \psi(\Gamma_g)  =  \partial B_{\infty}(G) \cap    \partial B_{0}(G).
      $$ Thus
      $
      \phi(\Gamma_f)  = \psi(\Gamma_g) =\partial  B_{\infty}(G) \cap  \partial B_{0}(G)
      $ is a Jordan curve along which the two polynomial Julia sets  are glued into $\mathcal{J}_G$.  This implies that  $G$ is the geometric gluing of $f$ and $g$.

\subsection{The continuity  of  $\mathcal{G}$}
To prove  the continuity, assume
 $(f, g), (f_n, g_n) \in \mathcal{N}_{d_1, d_0} \times \mathcal{N}_{d_2, d_0}$ with $$(f_n, g_n)
 \to (f, g).$$   It suffices to prove that $\mathcal{G}(f_n, g_n) \to \mathcal{G}(f,g)$.  We
  may write $\mathcal{G}(f_n, g_n)$ in the form of (\ref{gg-f}).
   Noting that the proof of Lemma~\ref{cp}   only utilizes the fact that the Julia set of $G_n$ is connected, which is obviously
   true for $\mathcal{G}(f_n, g_n)$. So the same argument can be used to prove that $\mathcal{G}(f_n, g_n)$ lies in a compact family. But on the other hand,  any limit of the sequence $\mathcal{G}(f_n, g_n)$ must have the same puzzle structure as that of $\mathcal{G}(f, g)$, and thus must be equal to $\mathcal{G}(f, g)$ by the rigidity assertion in $\S6$.  This implies that $\mathcal{G}(f_n, g_n) \to \mathcal{G}(f,g)$ as $n \to \infty$. The continuity of $\mathcal{G}$ follows.


\end{document}